\theoremstyle{thmstyleone}%
\newtheorem{theorem}{Theorem}
\newtheorem{lemma}[theorem]{Lemma}%
\theoremstyle{thmstyletwo}%
\newtheorem{remark}{Remark}%
\theoremstyle{thmstylethree}%
\begin{document}
\newcommand{\R}{\mathds{R}}
\newcommand{\M}{M^{+}(\R^n)}
\newcommand{\wa}{\mathbf{W}_{\alpha,p}}
\newcommand{\ia}{\mathbf{I}_{2\alpha}}

\title{Radially symmetric solutions to a Lane-Emden type system}
\author{\fnm{Genival} \sur{da Silva}\footnote{email: gdasilva@tamusa.edu, website: \url{www.gdasilvajr.com}}}
\affil{\orgdiv{Department of Mathematics}, \orgname{Texas A\&M University - San Antonio}}


\abstract{The existence of radially symmetric solutions is discussed for a Lane-Emden type system. This answer a question posed by \cite{bib1}. We also comment on the inhomogeneous version of the same system and discuss some open questions.}

\keywords{Nonlinear elliptic equations, Wolff potentials, fractional laplacian, radially symmetric solutions}


\pacs[MSC Classification]{35J70, 45G15, 35B09, 35R11}

\maketitle
\section{Introduction}\label{sec1}
In this manuscript we analyze the existence of solutions to the system
\begin{equation}\label{FracLap}
     \left\{
\begin{aligned}
& (-\Delta)^{\alpha} u= \sigma\, v^{q_1}, \quad v>0 \quad \mbox{in}\quad \mathbb{R}^n,\\ 
& (-\Delta)^{\alpha} v= \sigma\, u^{q_2}, \quad u>0 \quad \mbox{in}\quad \mathbb{R}^n,\\
& \liminf\limits_{|x|\to \infty}u(x)=0, \quad \liminf\limits_{|x|\to \infty}v(x)=0.
\end{aligned}
\right.
 \end{equation}
 when $\sigma\in M^{+}(\mathds{R}^{n})$ is radially symmetric satisfying 
 $\mathbf{I}_{2\alpha}\sigma(x)\not\equiv\infty$ for almost every $x\in \mathds{R}^n$, or equivalently, 
\begin{equation}\label{FinPot}
    \int_1^\infty\left(\frac{\sigma(B(0,t))}{t^{n-2\alpha }}\right)\frac{\mathrm{d} t}{t}<\infty,
\end{equation}
 with $0<\alpha<n/2$ and $q_1,~ q_2\in(0,1)$.
 
 A solution $(u,v)$ to \eqref{FracLap} is understood in the sense
 \begin{equation}\label{IntSys}
 \left\{
\begin{aligned}
& u(x)=\mathbf{I}_{2\alpha}(v^{q_1}\mathrm{d} \sigma)(x), \quad x\in \mathbb{R}^n,\\ 
& v(x)=\mathbf{I}_{2\alpha}(u^{q_2}\mathrm{d} \sigma)(x), \quad x\in \mathbb{R}^n,
\end{aligned}
\right.
 \end{equation}
where $\mathbf{I}_{2\alpha}$ is the Riesz Potential defined by 
\begin{equation}\label{RieszPotential}
    \mathbf{I}_{\alpha}\sigma(x)=\int_{\mathbb{R}^n}\frac{\mathrm{d}\sigma(y)}{|x-y|^{n-\alpha}}, \quad x\in \mathbb{R}^n.
\end{equation}
This type of problem gained attention after the publication of the seminal paper \cite{brezis92}, where the authors gave necessary and sufficient conditions for existence and uniqueness of solutions of the single semilinear elliptic problem 
$$-\Delta u = \sigma(x) u^q$$ in $\mathds{R}^n (n \geq 3)$  with $0<q<1, \; 0 \not \equiv  \sigma \geq 0$ and $\sigma \in L^{\infty}_{\mathrm{loc}}(\mathds{R})$. They proved that a bounded solution exists if and and only if $\mathbf{I}_{2}\sigma(x)\in L^{\infty}(\mathds{R})$. Additionally, the following pointwise bound holds
\begin{equation}\label{BK estimate}
    c^{-1}\left(\mathbf{I}_2\sigma\right)^{\frac{1}{1-q}}\leq u\leq c\,(\mathbf{I}_2\sigma)
\end{equation}
where $c>0$ is a constant independent of $u$.

Other works followed, and interesting results were published generalizing \cite{brezis92} to a wider class of problems. For example, in \cite{maly1,maly2} the authors generalize the Brezis-Kamin estimates to the quasilinear problem $$-\Delta_{p} u=\sigma,$$ obtaining the estimates
\begin{equation}
    c^{-1}\mathbf{W}_{1,p} \sigma(x) \leq u(x) \leq c\,\mathbf{W}_{1,p}\sigma(x), \quad x\in \mathds{R}^n,
\end{equation}
where $\mathbf{W}_{\alpha,p}\sigma(x)$ is the Wolff potential of $\sigma$, introduced in \cite{wolff83} and defined by
\begin{equation}\label{wolff potential}
    \mathbf{W}_{\alpha,p}\sigma(x)=\int_0^{\infty} \left(\frac{\sigma(B(x,t))}{t^{n-\alpha p}}\right)^{\frac{1}{p-1}}\frac{\mathrm{d} t}{t}, \quad x\in\mathds{R}^n,
\end{equation}
for $\sigma\in {M}^+(\mathds{R}^n), \; 1<p<\infty, \; 0<\alpha<{n}/{p}$ and $B(x,t)$ is the open ball of radius $t>0$ centered at $x$. See the wonderful textbooks \cite{adams96,hei2006} for more on nonlinear potentials.
\begin{remark}
Notice that 
\begin{equation*}
    \mathbf{I}_{2\alpha}\sigma (x)=\int_{\mathds{R}^n} \frac{\mathrm{d}\sigma(y)}{|x-y|^{n-2\alpha}}=(n-2\alpha)\int_{0}^\infty \frac{\sigma(B(x,t))}{t^{n-2\alpha}}\frac{\mathrm{d} t}{t}=(n-2\alpha)\mathbf{W}_{\alpha,2}\sigma(x),
\end{equation*}
so the Wolff potential coincides (up to a constant) with the Riesz potential when $p=2$.
\end{remark}

In \cite{verb16} the authors analyze the equivalent Brezis-Kamin problem for the p-Laplacian $$-\Delta_p u=\sigma \, u^q \quad \text{in} \, \, \,  \mathds{R}^n,$$and obtain (not necessary bounded) solutions satisfying 
\begin{equation}\label{Verb17 estimate}
    c^{-1}\left(\mathbf{W}_{1,p}\sigma\right)^{\frac{p-1}{p-1-q}}\leq u\leq c\,\left( \mathbf{W}_{1,p}\sigma +\left(\mathbf{W}_{1,p}\sigma\right)^{\frac{p-1}{p-1-q}} \right)
\end{equation}
In \cite{bib1}, the authors generalize these ideas to systems of the form (\ref{FracLap}) and obtain similar estimates. More precisely they proved that under certain conditions there exists a solution pair $(u,v)$ to (\ref{FracLap}) satisfying the following conditions
\begin{equation}\label{estimateDoO}
     \begin{aligned}
    & c^{-1}\left(\mathbf{I}_{2\alpha}\sigma\right)^{\gamma_1}\leq u\leq c\left(\mathbf{I}_{2\alpha}\sigma + \left(\mathbf{I}_{2\alpha}\sigma\right)^{\gamma_1} \right),\\
   & c^{-1}\left(\mathbf{I}_{2\alpha}\sigma\right)^{\gamma_2}\leq v\leq c\left(\mathbf{I}_{2\alpha}\sigma + \left(\mathbf{I}_{2\alpha}\sigma\right)^{\gamma_2} \right),
    \end{aligned}
\end{equation}
where $$\gamma_1=\frac{1+q_1}{1-q_1q_2},\;\gamma_2=\frac{1+q_2}{1-q_1q_2}.$$
\begin{remark}\label{lowB}
Based on the assumption \eqref{FinPot}, they also show that all nontrivial solutions to \eqref{FracLap} satisfy the lower bounds in \eqref{estimateDoO}. 
\end{remark}
In the same paper they proposed a question of whether or not a criteria could be found in the case $\sigma$ is radially symmetric, generalizing  \cite[Prop. 5.2]{verb16} to the case of systems. Our first goal is to answer this question positively and prove the following theorem
\begin{theorem}\label{t1} Let $ \sigma \in M^+(\mathds{R}^n)$ be radially symmetric. Then there is a solution $(u,v)$ to system \eqref{FracLap} satisfying \eqref{estimateDoO}  if and only if  
\begin{equation} \label{limc}
\int_{|y| \geq 1}\frac{d\sigma(y)}{|y|^{n-2\alpha} } < \infty\text{ and }\;
\limsup\limits_{|x| \to 0} \frac{\frac{1}{|x|^{(n-2\alpha)\frac{1}{\gamma_{i}}}}\int_{B(0,|x|)} \frac{d\sigma(y)}{|y|^{r_{i}(n-2\alpha)}} }{ \int_{|y| \geq |x|} \frac{d\sigma(y)}{|y|^{n-2\alpha} }} < \infty, i=1,2.
\end{equation}
where $r_{1}=1-\frac{1}{\gamma_1}$ and $r_{2}=1-\frac{1}{\gamma_2}.$
\end{theorem}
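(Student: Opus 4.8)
The plan is to reduce the existence of a solution pair obeying \eqref{estimateDoO} to a pair of pointwise potential inequalities for $W:=\mathbf{I}_{2\alpha}\sigma$, to rewrite these inequalities in the radial setting as one-dimensional inequalities for the radial profile of $W$, and to match the resulting one-dimensional condition with \eqref{limc}. I will use throughout the exponent identities $q_1\gamma_2=\gamma_1-1$ and $q_2\gamma_1=\gamma_2-1$ (immediate from the definitions of $\gamma_1,\gamma_2$) together with the complementarity $r_i(n-2\alpha)+\frac{n-2\alpha}{\gamma_i}=n-2\alpha$ coming from $r_i=1-\frac1{\gamma_i}$. The first step is a pointwise reformulation: by the iteration scheme of \cite{bib1} (monotone approximation dominated from above by a supersolution of the form $A\big(W+W^{\gamma_i}\big)$), the condition
\begin{equation}\label{eq:plan-star}
\mathbf{I}_{2\alpha}\!\big(W^{\gamma_i-1}\,\mathrm{d}\sigma\big)(x)\ \le\ C\,\big(W(x)+W(x)^{\gamma_i}\big),\qquad i=1,2,
\end{equation}
holding for a.e.\ $x$ and some $C>0$, is sufficient for the existence of a solution of \eqref{FracLap} satisfying \eqref{estimateDoO}; and it is also necessary, since for such a solution Remark \ref{lowB} gives $v\ge c^{-1}W^{\gamma_2}$, hence $u=\mathbf{I}_{2\alpha}(v^{q_1}\,\mathrm{d}\sigma)\ge c^{-q_1}\mathbf{I}_{2\alpha}(W^{\gamma_1-1}\,\mathrm{d}\sigma)$, and the upper bound in \eqref{estimateDoO} then yields \eqref{eq:plan-star} for $i=1$ (symmetrically for $i=2$). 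In particular the existence of a solution forces $W\not\equiv\infty$, equivalently the first condition in \eqref{limc}, which I assume from now on.

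Next I would carry out the radial reduction. Writing $m(t)=\sigma(B(0,t))$, the geometric core is the two-sided estimate, valid for radial $\sigma$,
\begin{equation}\label{eq:plan-radest}
W(x)\ \asymp\ \frac{m(|x|)}{|x|^{\,n-2\alpha}}+\int_{|y|\ge|x|}\frac{\mathrm{d}\sigma(y)}{|y|^{\,n-2\alpha}}\ =:\ \kappa(|x|)\qquad(\text{a.e. }x),
\end{equation}
and, more generally, $\mathbf{I}_{2\alpha}(f\,\mathrm{d}\sigma)(x)\asymp|x|^{-(n-2\alpha)}\int_{B(0,|x|)}f\,\mathrm{d}\sigma+\int_{|y|\ge|x|}|y|^{-(n-2\alpha)}f(y)\,\mathrm{d}\sigma(y)$ for radial $f\ge0$; one proves this by splitting $\mathds{R}^n$ into $\{|y|<|x|/2\}$, the annulus $\{|x|/2\le|y|\le2|x|\}$, and $\{|y|>2|x|\}$, the only delicate piece being the annulus, where radial symmetry and $0<2\alpha<n$ let one average the kernel over spheres. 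A direct computation shows that $r\mapsto\kappa(r)$ is nonincreasing. Applying \eqref{eq:plan-radest} with $f=W^{\gamma_i-1}$, the ``outer'' contribution is automatically controlled by the right-hand side of \eqref{eq:plan-star}, because $\kappa$ is nonincreasing:
\[
\int_{|y|\ge r}\frac{W(y)^{\gamma_i-1}\,\mathrm{d}\sigma(y)}{|y|^{\,n-2\alpha}}\ \lesssim\ \kappa(r)^{\gamma_i-1}\int_{|y|\ge r}\frac{\mathrm{d}\sigma(y)}{|y|^{\,n-2\alpha}}\ \le\ \kappa(r)^{\gamma_i}\ \asymp\ W(x)^{\gamma_i}.
\]
Hence \eqref{eq:plan-star} is equivalent to the scalar inequalities
\begin{equation}\label{eq:plan-scalar}
\frac{1}{r^{\,n-2\alpha}}\int_{B(0,r)}\kappa(|y|)^{\gamma_i-1}\,\mathrm{d}\sigma(y)\ \le\ C\,\big(\kappa(r)+\kappa(r)^{\gamma_i}\big),\qquad r>0,\ i=1,2.
\end{equation}

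It remains to match \eqref{eq:plan-scalar} with \eqref{limc}, which is the technical heart. First one localizes near the origin: for $r$ bounded away from $0$, \eqref{eq:plan-scalar} follows from $W\not\equiv\infty$; and if $\kappa(0^+)<\infty$ then $W$ is bounded near $0$, \eqref{eq:plan-scalar} holds trivially, and the $\limsup$ in \eqref{limc} is finite as well; so only the regime $r\to0$ with $\kappa(0^+)=\infty$ is at issue. There I would decompose $B(0,r)$ into dyadic annuli, replace $\kappa(|y|)$ by its value at the dyadic scale, perform an Abel summation, and use the identity $\mathrm{d}\big(\int_{|y|\ge t}|y|^{-(n-2\alpha)}\,\mathrm{d}\sigma\big)=-\,t^{-(n-2\alpha)}\,\mathrm{d} m(t)$ to trade the weight $\kappa(|y|)^{\gamma_i-1}$ for $|y|^{-r_i(n-2\alpha)}$; the powers balance precisely because of the complementarity noted above. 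The outcome is that, as $r\to0$, \eqref{eq:plan-scalar} for index $i$ is equivalent to
\[
\frac{1}{r^{\,(n-2\alpha)/\gamma_i}}\int_{B(0,r)}\frac{\mathrm{d}\sigma(y)}{|y|^{\,r_i(n-2\alpha)}}\ \lesssim\ \int_{|y|\ge r}\frac{\mathrm{d}\sigma(y)}{|y|^{\,n-2\alpha}},
\]
which is exactly the $\limsup$ condition in \eqref{limc}. Running this equivalence in both directions and combining with the previous steps proves the theorem; necessity of \eqref{limc} uses, in addition, the lower bounds of Remark \ref{lowB}.

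The step I expect to be the main obstacle is the last one: converting the maximal-function-type inequality \eqref{eq:plan-scalar}, in which the nonincreasing profile $\kappa$ enters to a fractional power inside the integral, into the clean power-weighted form appearing in \eqref{limc}. Since $\kappa$ is only monotone --- not doubling and not of power type --- one cannot simply replace $\kappa(|y|)$ by a power of $|y|$; one has to exploit the interplay of the two terms defining $\kappa$ and the complementarity $r_i+\tfrac1{\gamma_i}=1$, and it is precisely here that restricting to $|x|\to0$ (rather than demanding a global bound) is what makes the equivalence go through. A secondary technical point is making the radial estimate \eqref{eq:plan-radest} precise near spheres carrying concentrated mass, which is the reason the statement is phrased ``for a.e.\ $x$''.
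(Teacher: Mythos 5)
Your high-level strategy matches the paper's: you reduce existence-with-Brezis--Kamin to the pointwise condition $\mathbf{I}_{2\alpha}\bigl((\mathbf{I}_{2\alpha}\sigma)^{\gamma_i-1}\,\mathrm{d}\sigma\bigr)\lesssim \mathbf{I}_{2\alpha}\sigma+(\mathbf{I}_{2\alpha}\sigma)^{\gamma_i}$ (this is the paper's \eqref{c114}, via \cite[Thm.~1.3]{bib1}), then pass to the radial profile $\kappa(r)=\tfrac{\sigma(B(0,r))}{r^{n-2\alpha}}+\int_{|y|\ge r}\tfrac{\mathrm{d}\sigma}{|y|^{n-2\alpha}}$ and try to match the resulting one-dimensional inequality with \eqref{limc}. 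Your handling of the ``outer'' piece of the potential (using monotonicity of $\kappa$ to dominate it by $\kappa(r)^{\gamma_i}$) is correct and is also implicit in the paper's estimates of the terms $III$ and $IV$ in Theorem~\ref{con1}.

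However, there is a genuine gap precisely at the point you yourself flag as ``the technical heart.'' You arrive at the intermediate inequality
\[
\frac{1}{r^{n-2\alpha}}\int_{B(0,r)}\kappa(|y|)^{\gamma_i-1}\,\mathrm{d}\sigma(y)\ \lesssim\ \kappa(r)+\kappa(r)^{\gamma_i},
\]
with the factor $\kappa(|y|)^{\gamma_i-1}$ \emph{inside} the integral, and you want to convert it into the form in \eqref{limc}, which has the power weight $|y|^{-(n-2\alpha)r_i}$ inside the integral and the exponent $\gamma_i$ \emph{outside} (in the sense $M_i(r)\lesssim r^{(n-2\alpha)/\gamma_i}L(r)$, i.e.\ $M_i(r)^{\gamma_i}\lesssim r^{n-2\alpha}L(r)^{\gamma_i}$). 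Your sketch --- dyadic annuli, replacing $\kappa(|y|)$ by its value at the dyadic scale, Abel summation --- is not justified: $\kappa$ is merely nonincreasing, \emph{not} doubling (for instance, radial measures concentrated near a sequence of spheres $|y|=\rho_k\downarrow 0$ make $\kappa$ jump by arbitrarily large factors across a single dyadic annulus), so one cannot freeze $\kappa$ on dyadic shells and then sum by parts. The exponent bookkeeping works in the power-law case, but the general case requires a real argument, and you supply none.

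The paper circumvents this exact difficulty by a different intermediate reduction. Instead of staying with the raw scalar inequality above, it establishes \emph{Theorem~\ref{con1}}: a self-contained two-sided estimate for radial solutions,
\[
u(x)\approx \mathbf{K}_{1}\sigma(x)+\Bigl(\int_{|y|\ge|x|}\tfrac{\mathrm{d}\sigma(y)}{|y|^{n-2\alpha}}\Bigr)^{\gamma_1},\qquad
\mathbf{K}_{1}\sigma(x)=\frac{1}{|x|^{n-2\alpha}}\Bigl(\int_{|y|<|x|}\frac{\mathrm{d}\sigma(y)}{|y|^{(n-2\alpha)r_1}}\Bigr)^{\gamma_1},
\]
(and symmetrically for $v$), proved by constructing a supersolution of exactly this form and running the monotone iteration between it and the subsolution $\lambda(\mathbf{I}_{2\alpha}\sigma)^{\gamma_1}$. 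The heavy lifting --- trading the weight $\kappa(|y|)^{\gamma_i-1}$ for $|y|^{-(n-2\alpha)r_i}$ with the exponent $\gamma_i$ moved outside --- happens inside the estimates of $I,\,II,\,III,\,IV$ in that proof, where the complementarity $r_i+\tfrac1{\gamma_i}=1$ is used term by term, together with Young's inequality. With Theorem~\ref{con1} in hand, the condition \eqref{c114} becomes equivalent to $\mathbf{K}_i\sigma\lesssim \mathbf{I}_{2\alpha}\sigma+(\mathbf{I}_{2\alpha}\sigma)^{\gamma_i}$, which already has the same shape as \eqref{limc}; the remaining equivalence is then settled by a short argument (H\"older's inequality for $|x|\ge 1$, boundedness for intermediate $|x|$, and a clean splitting with a parameter $d=k|x|$ for small $|x|$). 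Your proposal is missing this key lemma (or any substitute for it), and the ``dyadic + Abel'' sketch does not fill the hole. I would encourage you to either prove Theorem~\ref{con1} (or the comparability $\int_{B(0,r)}\kappa^{\gamma_i-1}\,\mathrm{d}\sigma\asymp M_i(r)^{\gamma_i}$, modulo the error terms that Young's inequality absorbs), or else flesh out the dyadic argument and confront the non-doubling issue head on.
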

Our second result consists of analyzing the existence of solutions to a generalization of system \eqref{FracLap}:
\begin{equation}\label{inFracLap}
     \left\{
\begin{aligned}
& (-\Delta)^{\alpha} u= \sigma\, v^{q_1}+ \mu_{1}\\
& (-\Delta)^{\alpha} v= \sigma\, u^{q_2},+\mu_{2}\\
& \liminf\limits_{|x|\to \infty}u(x)=0, \quad \liminf\limits_{|x|\to \infty}v(x)=0.
\end{aligned}
\right.
 \end{equation}
where $\sigma,\mu_{1},\mu_{2}\in {M}^+$ are not necessarily radially symmetric  but $\sigma$ satisfies the condition
\begin{equation}\label{sigcon}
           \sigma(E)\leq C_{\sigma}\,\mathrm{cap}_{\alpha,2}(E) \quad \mbox{for all compact sets } E\subset \mathds{R}^n.
\end{equation}
We understand system \eqref{inFracLap} as  
 \begin{equation}
    \left\{
\begin{aligned}
& u= \ia\left(v^{q_1}\mathrm{d} \sigma + d\mu_1\right), \\
& v= \ia\left(u^{q_2}\mathrm{d} \sigma+ d\mu_2\right),
\end{aligned}
\right.
\end{equation}
We'll show that if $\ia \mu_{i}(x) \leq \ia\sigma (x)$ then we still have existence, more precisely we have:
\begin{theorem}\label{t2} Let $ \sigma,\mu_{1},\mu_{2}\in M^+(\mathds{R}^n)$ not necessarily radially symmetric where $\sigma$ satisfies \eqref{sigcon}. Suppose that for $i=1,2$, there is a constant $C>0$ such that  $$\ia\mu_{i}(x)\leq C\, \ia\sigma(x)$$
Then there is a solution $(u,v)$ to system \eqref{inFracLap} satisfying \eqref{estimateDoO}.
\end{theorem}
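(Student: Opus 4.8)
The plan is to obtain $(u,v)$ as the increasing limit of an iteration started from a solution of the homogeneous system \eqref{FracLap}, carrying the two–sided bound \eqref{estimateDoO} along the iteration; the hypothesis $\ia\mu_i\le C\,\ia\sigma$ is precisely what lets the inhomogeneous terms be absorbed into the upper bound. Put $K:=\ia\sigma$, which under \eqref{sigcon} is finite a.e.\ and (as in \cite{bib1}, whose solutions obey both $\liminf_{|x|\to\infty}u=0$ and $u\gtrsim K^{\gamma_1}$) satisfies $\liminf_{|x|\to\infty}K(x)=0$. By the result of \cite{bib1} applied to \eqref{FracLap} under assumption \eqref{sigcon}, there is a pair $(\underline u,\underline v)$ with $\underline u=\ia(\underline v^{\,q_1}\mathrm d\sigma)$, $\underline v=\ia(\underline u^{\,q_2}\mathrm d\sigma)$ and $c^{-1}K^{\gamma_1}\le\underline u\le c(K+K^{\gamma_1})$, $c^{-1}K^{\gamma_2}\le\underline v\le c(K+K^{\gamma_2})$. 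Since $\mu_1,\mu_2\ge0$, this pair is a subsolution of the integral form of \eqref{inFracLap}: $\ia(\underline v^{\,q_1}\mathrm d\sigma+\mathrm d\mu_1)=\underline u+\ia\mu_1\ge\underline u$, and symmetrically for $\underline v$.

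Define $u_0=\underline u$, $v_0=\underline v$ and, recursively,
$$u_{j+1}=\ia\!\left(v_j^{\,q_1}\mathrm d\sigma+\mathrm d\mu_1\right),\qquad v_{j+1}=\ia\!\left(u_j^{\,q_2}\mathrm d\sigma+\mathrm d\mu_2\right).$$
Since $t\mapsto t^{q_i}$ is nondecreasing and $\ia$ is order preserving, the subsolution property gives $u_0\le u_1$ and $v_0\le v_1$, and then by induction $u_j\le u_{j+1}$, $v_j\le v_{j+1}$ for all $j\ge0$.

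The main step is to show the iterates stay below a fixed multiple of the right–hand sides of \eqref{estimateDoO}: there is $C_1$, depending only on $n,\alpha,q_1,q_2,C_\sigma,C$, with $u_j\le C_1(K+K^{\gamma_1})$ and $v_j\le C_1(K+K^{\gamma_2})$ for all $j$. This rests on the pointwise inequality
$$\ia\!\left(K^{s}\,\mathrm d\sigma\right)\le A(s)\,K^{s+1}\qquad(s>0),$$
which is exactly where \eqref{sigcon} enters: for $s=1$ it is a form of the admissibility condition \eqref{sigcon}, and for general $s>0$ it follows from it by Hölder's inequality when $0<s\le1$ and by iteration together with standard Riesz–potential estimates when $s\ge1$. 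Granting this, the induction runs as follows. For $j=0$ the bound holds once $C_1\ge c$. Assuming $v_j\le C_1(K+K^{\gamma_2})$, use $(a+b)^{q_1}\le a^{q_1}+b^{q_1}$, the identity $\gamma_2q_1+1=\gamma_1$, the elementary inequality $K^{q_1+1}\le K+K^{\gamma_1}$ (valid since $1\le q_1+1\le\gamma_1$), and $\ia\mu_1\le C K$ to obtain
$$u_{j+1}\le C_1^{\,q_1}\,\ia\!\left((K^{q_1}+K^{\gamma_2q_1})\mathrm d\sigma\right)+C K\le\left(A'\,C_1^{\,q_1}+C\right)(K+K^{\gamma_1});$$
because $q_1<1$ one can fix $C_1$ large enough that $A'C_1^{\,q_1}+C\le C_1$, closing the induction for $u$, and the bound for $v_{j+1}$ is symmetric via $\gamma_1q_2+1=\gamma_2$.

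By monotonicity the limits $u=\lim_j u_j$, $v=\lim_j v_j$ exist, are positive (as $u\ge\underline u\ge c^{-1}K^{\gamma_1}>0$, and likewise $v$), finite a.e., and satisfy the upper bounds in \eqref{estimateDoO}; monotone convergence in the integrals defining $u_{j+1},v_{j+1}$ shows $(u,v)$ solves the integral system associated with \eqref{inFracLap}, and the upper bound together with $\liminf_{|x|\to\infty}K=0$ yields the boundary conditions. The lower bounds in \eqref{estimateDoO} are immediate from $u\ge u_0=\underline u\ge c^{-1}K^{\gamma_1}$ and $v\ge\underline v\ge c^{-1}K^{\gamma_2}$. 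The principal obstacle is the pointwise estimate $\ia(K^s\,\mathrm d\sigma)\le A(s)K^{s+1}$ over the full range of exponents occurring here, namely $s=q_1,q_2,\gamma_1-1,\gamma_2-1$, the last two of which blow up as $q_1q_2\to1$; once it is in hand, the inhomogeneities play only an auxiliary role, entering solely through $\ia\mu_i\le C\,\ia\sigma$.
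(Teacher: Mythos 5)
Your overall strategy is the same as the paper's: construct a monotone iteration from a subsolution, show it stays below $c(\ia\sigma + (\ia\sigma)^{\gamma_i})$, and pass to the limit. The cosmetic differences (you seed from the solution of the homogeneous system rather than from the explicit pair $(\lambda(\ia\sigma)^{\gamma_1},\lambda(\ia\sigma)^{\gamma_2})$, and you establish the upper bound inductively rather than exhibiting a supersolution first) are immaterial. However, there is a genuine gap in the step you yourself flag as "the principal obstacle."

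You assert the pointwise estimate $\ia\bigl(K^s\,\mathrm d\sigma\bigr)\le A(s)\,K^{s+1}$ for $K=\ia\sigma$ and all $s>0$, and claim it follows from \eqref{sigcon} via Hölder (for $s\le1$) and "iteration with standard Riesz-potential estimates" (for $s\ge1$). This inequality is false in general. For example, if $\sigma$ is a compactly supported measure of bounded density (which satisfies \eqref{sigcon}), then $K(x)\sim|x|^{-(n-2\alpha)}$ as $|x|\to\infty$, hence $K^s\,\mathrm d\sigma$ is again a compactly supported finite measure and $\ia(K^s\,\mathrm d\sigma)(x)\sim|x|^{-(n-2\alpha)}\approx K(x)$, whereas $K(x)^{s+1}\sim|x|^{-(n-2\alpha)(s+1)}\ll K(x)$. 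The inequality that is actually true under \eqref{sigcon}, and what the paper proves, is
\[
\ia\bigl(K^s\,\mathrm d\sigma\bigr)\le A(s)\,\bigl(K+K^{s+1}\bigr),
\]
and its proof is not a Hölder/iteration argument: one splits $K$ into a local potential $\ia\sigma_{B(x,2t)}$ and a tail, applies the Wolff-type inequality $\int_E(\ia\sigma_E)^s\,\mathrm d\sigma\le c\,\sigma(E)$ (the paper's Lemma~\ref{strcon}, imported from \cite{verb17}) to the local part, and bounds the tail by $\sigma(B(x,t))\,K(x)^s$. The extra additive term $K$ comes from the local part and cannot be absorbed into $K^{s+1}$ near infinity. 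Your induction does survive once you replace your estimate by the correct one (since $K^{q_1+1}\le K+K^{\gamma_1}$ as you note, and $\gamma_2q_1+1=\gamma_1$), but as written the proposal both states a false inequality and gives an incorrect sketch of its proof, which is precisely the technical heart of the theorem.
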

\subsection*{Notations and definitions}  
We assume $\Omega\subseteq \R^n$ is a domain. We denote by $M^+(\Omega)$ the space of all nonnegative locally finite Borel measures on $\Omega$ and $\sigma(E)= \int_E d \sigma$ the $\sigma$-measure of a measurable set $E\subseteq \Omega$. The letter $c$ will always denote a positive constant which may vary from line to line.
\subsection*{Organization of the paper} 
In Sect.~\ref{condit}, we present a criteria for existence of solutions to \eqref{FracLap} when $\sigma$ is radial, in Sect.~\ref{main_proof} we give the proof of theorem \ref{t1} and in Sect.~\ref{pt2} we prove theorem \ref{t2}.

\section{Conditions for existence}\label{condit}

In this section we give necessary and sufficient conditions for existence of solutions to the system (\ref{FracLap}) in case $ \sigma \in M^+(\mathds{R}^n)$ is radially symmetric. 

The following result can be considered as a generalization of \cite[Prop. 5.1]{verb16}.
\begin{theorem} \label{con1} Let $\sigma \in M^+(\mathds{R}^n)$ be radially symmetric. Then there exists a nontrivial solution pair $(u,v)$ to \eqref{FracLap} if and only if 
\begin{equation} \label{radialcond}
\int_{|y|<1} \frac{d\sigma(y)}{|y|^{(n-2\alpha)r_{1}}} < \infty, \int_{|y|<1} \frac{d\sigma(y)}{|y|^{(n-2\alpha)r_{2}}} < \infty \text { and } \int_{|y|\geq 1} \frac{d\sigma(y)}{|y|^{n-2\alpha}}  < \infty,
\end{equation}
where $r_{i}=1-\frac{1}{\gamma_{i}}.$ Moreover, we have
\begin{equation}\label{uvbound}
     \begin{aligned}
    & u(x)\approx \left (\frac{1}{
|x|^{n-2\alpha}}\left(\int_{|y|<|x|} \frac{d\sigma(y)}{|y|^{(n-2\alpha)r_{1}}}\right)^{\gamma_{1}} + \left(\int_{|y|\geq|x|} \frac{d\sigma(y)}{|y|^{n-2\alpha}}\right)^{\gamma_{1}} \right )\\
   & v(y)\approx   \left (\frac{1}{
|y|^{n-2\alpha}}\left(\int_{|z|<|y|} \frac{d\sigma(z)}{|z|^{(n-2\alpha)r_{2}}}\right)^{\gamma_{2}} + \left(\int_{|z|\geq|y|} \frac{d\sigma(z)}{|z|^{n-2\alpha}}\right)^{\gamma_{2}} \right ).
        \end{aligned}
\end{equation}
\end{theorem}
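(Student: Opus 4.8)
The plan is to reduce the system to a pair of pointwise integral estimates by exploiting radial symmetry, then build a solution by monotone iteration. First I would record the elementary two-sided bound for the Riesz potential of a radial measure: for $\sigma\in M^+(\R^n)$ radial and any radial nonnegative weight $w$, one has
\begin{equation*}
\ia(w\,d\sigma)(x)\approx \frac{1}{|x|^{n-2\alpha}}\int_{|y|<|x|} w(y)\,d\sigma(y)+\int_{|y|\ge |x|}\frac{w(y)}{|y|^{n-2\alpha}}\,d\sigma(y),
\end{equation*}
with constants depending only on $n$ and $\alpha$. This is the standard splitting $|x-y|\approx\max(|x|,|y|)$ valid after averaging over the sphere (cf. \cite[Prop. 5.1]{verb16} in the scalar case). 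Applying this to the integral formulation \eqref{IntSys} turns the problem into: find positive radial $u,v$ with $u(x)\approx |x|^{-(n-2\alpha)}\int_{|y|<|x|}v^{q_1}d\sigma+\int_{|y|\ge|x|}|y|^{-(n-2\alpha)}v^{q_1}d\sigma$ and symmetrically for $v$.

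Next I would treat \textbf{sufficiency}. Assuming \eqref{radialcond}, I would make the ansatz that $u,v$ are comparable to the right-hand sides of \eqref{uvbound} and verify this is a consistent fixed point. The cleanest route: define $T(u,v)$ and $S(u,v)$ by the two right-hand sides of \eqref{IntSys}, start from a small multiple of the conjectured lower bound (or from $0$), and iterate. Since $q_1,q_2\in(0,1)$ the maps are concave-type and the iteration is monotone increasing and bounded above by the conjectured upper bound — this requires plugging the trial functions \eqref{uvbound} into the radial potential estimate above and checking the algebra closes, using $\gamma_1=(1+q_1)/(1-q_1q_2)$, $\gamma_2=(1+q_2)/(1-q_1q_2)$, $r_i=1-1/\gamma_i$, and the elementary inequalities for sums raised to powers in $(0,1)$. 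The conditions \eqref{radialcond} are exactly what makes the trial functions finite (the first two integrals control behavior near $0$, the third controls behavior at $\infty$, and the $\liminf$ decay condition follows since each term tends to $0$). One then passes to the limit of the iteration by monotone convergence to obtain an actual solution satisfying \eqref{uvbound}, hence \eqref{estimateDoO} after comparing $\ia\sigma$ with the same radial splitting applied to $w\equiv 1$.

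For \textbf{necessity}, I would run the argument in reverse: given any nontrivial solution $(u,v)$, the lower bounds in \eqref{estimateDoO} (valid by Remark \ref{lowB} under \eqref{FinPot}, but here I must first note \eqref{FinPot} is forced, or argue directly) give $v^{q_1}\gtrsim (\ia\sigma)^{\gamma_1 q_1}$; feeding this into $u=\ia(v^{q_1}d\sigma)$ and using the radial estimate near the origin produces the finiteness of $\int_{|y|<1}|y|^{-(n-2\alpha)r_1}d\sigma$, and similarly with the roles swapped for $r_2$; the tail condition $\int_{|y|\ge1}|y|^{-(n-2\alpha)}d\sigma<\infty$ is immediate from $u(x)<\infty$ at a single point together with $v$ bounded below by a positive constant on compact sets. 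The \textbf{main obstacle} I anticipate is the self-consistency computation in the sufficiency part: one must show that the coupled system of power-type recursions in the two unknown profiles closes under precisely the exponents $\gamma_1,\gamma_2,r_1,r_2$, i.e. that substituting the ansatz reproduces itself up to constants — this is where the specific form of $\gamma_i$ (solving $\gamma_1=1+q_1\gamma_2$-type relations) is essential, and keeping track of the "near $0$" versus "near $\infty$" terms without them contaminating each other requires a careful but routine case analysis on whether $\ia\sigma$ is dominated by its local or its tail part at a given radius.
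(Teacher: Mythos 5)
Your outline follows the paper's proof essentially verbatim: necessity via the lower bounds of Remark \ref{lowB} applied to the radial splitting of $\ia\sigma$, and sufficiency via the sub/super-solution method with monotone iteration, where the supersolution is exactly the right-hand side of \eqref{uvbound} (the paper takes the subsolution $\lambda_1\big((\ia\sigma)^{\gamma_1},(\ia\sigma)^{\gamma_2}\big)$ from \cite{bib1}). The one caveat is that the "routine case analysis" you defer — verifying that $\ia(\overline{v}^{q_1}d\sigma)\lesssim\overline{u}$ — is in fact the bulk of the paper's argument: it requires a four-way decomposition of the integral into near/far parts in both $x$ and $y$, careful use of the identity $1+r_1\gamma_1=\gamma_1$ (and $q_1\gamma_2+1=\gamma_1$) to make the exponents close, and two separate applications of Young's inequality to control the mixed terms, so it should not be dismissed as automatic.
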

\begin{proof}~
Recall that if $\sigma$ is radial then $$\ia\sigma(x)\approx  \frac{\sigma(B(0,|x|)}{|x|^{n-2\alpha}} + \int_{|y|\geq|x|} \frac{d\sigma(y)}{|y|^{n-2\alpha} } ,$$ where we omit the first term when $x=0$. Moreover, we can assume $u,v$ are radial since $\ia\sigma(x)$ is radial in this case.\begin{enumerate}
    \item[($\Rightarrow$)] Suppose  $(u,v)$ is a solution to \eqref{FracLap}. Then according to remark \ref{lowB}
    \begin{equation}\label{ine1}
     \begin{aligned}
    & c\left(\mathbf{I}_{2\alpha}\sigma\right)^{\gamma_1}\leq u\\
   & c\left(\mathbf{I}_{2\alpha}\sigma\right)^{\gamma_2}\leq v.
        \end{aligned}
    \end{equation}
     In particular, we have
     \begin{equation}
    c\left(\int_{|y|\geq x} \frac{d\sigma(y)}{|y|^{n-2\alpha}}\right)^{\gamma_1}\leq u,
    \end{equation}
    but since $u$ is finite, this implies $\int_{|y|\geq 1} \frac{d\sigma(y)}{|y|^{n-2\alpha}}  < \infty$. Likewise,
\begin{equation}
    c\left(\frac{1}{|x|^{(n-2\alpha)}}\int_{|y|\leq x} \frac{|y|^{(n-2\alpha)r_{1}}d\sigma(y)}{|y|^{(n-2\alpha)r_{1}}}\right)^{\gamma_1}\leq u,
    \end{equation}
    For any $\delta>0$ such that $\delta<|y|$ we have
    \begin{equation}
    c\left(\frac{\delta^{(n-2\alpha)r_{1}}}{|x|^{(n-2\alpha)}}\int_{\delta<|y|\leq x} \frac{d\sigma(y)}{|y|^{(n-2\alpha)r_{1}}}\right)^{\gamma_1}\leq\left(\frac{1}{|x|^{(n-2\alpha)}}\int_{|y|\leq x} \frac{|y|^{(n-2\alpha)r_{1}}d\sigma(y)}{|y|^{(n-2\alpha)r_{1}}}\right)^{\gamma_1}\leq u,
    \end{equation}
    By symmetry, the exact same argument works with $v, r_{2}, \gamma_{2}$ instead of $u,r_{1},\gamma_{1}$.
  \item[($\Leftarrow$)]  Conversely, suppose \eqref{radialcond} holds. 	
Notice that 
\begin{equation*}
     \gamma_1=q_{1}\gamma_{2}+1,\;\gamma_2=q_{2}\gamma_{1}+1,\;
\end{equation*}
According to \cite[Proof of thm. 1]{bib1}, there is a $\lambda_{1}>0$ sufficiently small such that
\begin{equation*}
    (\underline{u},\underline{v})= \big(\lambda_1 (\ia\sigma)^{\gamma_1}, \lambda_1 (\ia\sigma)^{{\gamma_2}}\big)
\end{equation*}
is a subsolution to \eqref{FracLap}. So it's enough to find supersolution $(\overline{u},\overline{v})$ such that $\overline{u}\geq \underline{u}$ and $\overline{v}\geq \underline{v}$. Set
\begin{equation*}
     \begin{aligned}
    & \overline{u}(x)=\lambda \, \left (\frac{1}{
|x|^{n-2\alpha}}\left(\int_{|y|<|x|} \frac{d\sigma(y)}{|y|^{(n-2\alpha)r_{1}}}\right)^{\gamma_{1}} + \left(\int_{|y|\geq|x|} \frac{d\sigma(y)}{|y|^{n-2\alpha}}\right)^{\gamma_{1}} \right )\\
   & \overline{v}(y)= \lambda \, \left (\frac{1}{
|y|^{n-2\alpha}}\left(\int_{|z|<|y|} \frac{d\sigma(z)}{|z|^{(n-2\alpha)r_{2}}}\right)^{\gamma_{2}} + \left(\int_{|z|\geq|y|} \frac{d\sigma(z)}{|z|^{n-2\alpha}}\right)^{\gamma_{2}} \right )
        \end{aligned}
\end{equation*}
    where $\lambda$ is a constant to be determined later.
    
We claim that $ \overline{u} \geq \ia( \overline{v}^{q_{1}}d\sigma)$. For $x \neq 0$,  we have 
\begin{equation*}
\begin{aligned} 
\ia( \overline{v}^{q_{1}}d\sigma) & \approx \frac{1}{|x|^{n-2\alpha}}\int_{|y|<|x|}\overline{v}^{q_{1}}d\sigma+\,\int_{|y| \ge|x|}\frac{\overline{v}^{q_{1}}d\sigma}{|y|^{n-2\alpha}}\\
& \leq \lambda^{q_{1}}\,\frac{1}{|x|^{n-2\alpha}}\int_{|y|<|x|}\frac{1}{|y|^{(n-2\alpha)q_{1}}}\left(\int_{|z|<|y|} \frac{d\sigma(z)}{|z|^{(n-2\alpha)r_{2}}}\right)^{q_{1}\gamma_{2}}d\sigma(y)\\
&+ \lambda^{q_{1}}\,\frac{1}{|x|^{n-2\alpha}}\int_{|y|<|x|}\left(\int_{|z|\ge |y|} \frac{d\sigma(z)}{|z|^{n-2\alpha}}\right)^{q_{1}\gamma_{2}}d\sigma(y)\\
&+\lambda^{q_{1}}\,\int_{|y| \ge|x|}\frac{1}{|y|^{n-2\alpha}}\frac{1}{|y|^{(n-2\alpha)q_{1}}}\left(\int_{|z|<|y|} \frac{d\sigma(z)}{|z|^{(n-2\alpha)r_{2}}}\right)^{q_{1}\gamma_{2}}d\sigma(y)\\
&+\lambda^{q_{1}}\,\int_{|y| \ge|x|}\frac{1}{|y|^{n-2\alpha}}\left(\int_{|z|\ge |y|} \frac{d\sigma(z)}{|z|^{n-2\alpha}}\right)^{q_{1}\gamma_{2}}d\sigma(y)\\&:=\lambda^{q_{1}}(I+II+III+IV).
\end{aligned}
\end{equation*} 
Notice that
\begin{equation*} 
\begin{aligned} 
I&= \,\frac{1}{|x|^{n-2\alpha}}\int_{|y|<|x|}\frac{1}{|y|^{(n-2\alpha)q_{1}}}\left(\int_{|z|<|y|} \frac{d\sigma(z)}{|z|^{(n-2\alpha)r_{1}}|z|^{(n-2\alpha)(r_{2}-r_{1})}}\right)^{q_{1}\gamma_{2}}d\sigma(y)\\
&\leq \frac{1}{|x|^{n-2\alpha}}\int_{|y|<|x|}\frac{1}{|y|^{(n-2\alpha)q_{1}(1+\gamma_{2}(r_{2}-r_{1}))}}\left(\int_{|z|<|y|} \frac{d\sigma(z)}{|z|^{(n-2\alpha)r_{1}}}\right)^{q_{1}\gamma_{2}}d\sigma(y)\\
&\leq \frac{1}{|x|^{n-2\alpha}}\left(\int_{|y|<|x|} \frac{d\sigma(y)}{|y|^{(n-2\alpha)r_{1}}}\right)^{\gamma_{1}}.
\end{aligned} 
\end{equation*}

For the sake of convenience, we'll split $II$ in 2 parts:
\begin{eqnarray*}
II & = &  \frac{1}{|x|^{n-2\alpha}}\int_{|y|<|x|}\left(\int_{|y|\leq|z|<|x|} \frac{d\sigma(z)}{|z|^{n-2\alpha}} + \int_{|z|\geq|x|} \frac{d\sigma(z)}{|z|^{n-2\alpha}}\right)^{q_{1}\gamma_{2}}d\sigma(y)\\
& \le & c\frac{1}{|x|^{n-2\alpha}}\int_{|y|<|x|} \left(\int_{|y|\leq|z|<|x|} \frac{d\sigma(z)}{|z|^{n-2\alpha}}\right)^{q_{1}\gamma_{2}}d\sigma(y)\\ 
& + & c,\frac{1}{|x|^{n-2\alpha}}\int_{|y|<|x|} d\sigma(y)\left(\int_{|z|\ge|x|} \frac{d\sigma(z)}{|z|^{n-2\alpha}}\right)^{q_{1}\gamma_{2}} \\ 
&= & c(II_a+II_b).
\end{eqnarray*}
So
\begin{equation*}
\begin{aligned} 
II_a & \leq \frac{1}{|x|^{n-2\alpha}}\int_{|y|<|x|} \left(\int_{y\leq |z|<|x|} \frac{d\sigma(z)}{|z|^{(n-2\alpha)r_{1}}|z|^{(n-2\alpha)(1-r_{1})}}\right)^{q_{1}\gamma_{2}} d\sigma(y)\\
& \leq \frac{1}{|x|^{n-2\alpha}}\int_{|y|<|x|}  \frac{1}{|y|^{(n-2\alpha)(1-r_{1})q_{1}\gamma_{2}}}\left(\int_{y\leq |z|<|x|} \frac{d\sigma(z)}{|z|^{(n-2\alpha)r_{1}}}\right)^{q_{1}\gamma_{2}} d\sigma(y)\\
& \leq \frac{1}{|x|^{n-2\alpha}}\left(\int_{ |y|<|x|} \frac{d\sigma(y)}{|y|^{(n-2\alpha)r_{1}}}\right)^{\gamma_{1}} 
\end{aligned}
\end{equation*} 
and using Young's inequality with $\gamma_{1}$ and $\frac{1+q_{1}}{q_{1}+q_{1}q_{2}}$ we obtain we obtain
\begin{equation*}
\begin{aligned} 
II_b & =\frac{1}{|x|^{n-2\alpha}}\int_{|y|<|x|} d\sigma(y)\left(\int_{|z|\ge|x|} \frac{d\sigma(z)}{|z|^{n-2\alpha}}\right)^{q_{1}\gamma_{2}} \\
& \le c\, \left(\left(\frac{1}{|x|^{n-2\alpha}}\int_{|y|<|x|} d\sigma(y)\right)^{\gamma_{1}}+ \left(\int_{|z|\geq|x|} \frac{d\sigma(z)}{|z|^{n-2\alpha}}\right)^{\gamma_{1}} \right).
\end{aligned}
\end{equation*} 
Next, we  have
\begin{equation*}
\begin{aligned} 
&III \le c\,\int_{|y| \ge|x|}\frac{1}{|y|^{n-2\alpha}}\frac{1}{|y|^{(n-2\alpha)q_{1}}}\left(\int_{|z|<|x|} \frac{d\sigma(z)}{|z|^{(n-2\alpha)r_{2}}}\right)^{q_{1}\gamma_{2}}d\sigma(y)\\
&+c\,\int_{|y| \ge|x|}\frac{1}{|y|^{n-2\alpha}}\frac{1}{|y|^{(n-2\alpha)q_{1}}}\left(\int_{|x|\leq |z|\leq |y|} \frac{d\sigma(z)}{|z|^{(n-2\alpha)r_{2}}}\right)^{q_{1}\gamma_{2}}d\sigma(y)\\
&\le c\,\frac{1}{|x|^{(n-2\alpha)q_{1}}}\left(\int_{|z|<|x|}\frac{d\sigma(z)}{|z|^{(n-2\alpha)r_{2}}}\right)^{q_{1}\gamma_{2}}\int_{|y|\ge|x|} \frac{d\sigma(y)}{|y|^{n-2\alpha}}\\
&+c\,\int_{|y|\ge|x|} \frac{1}{|y|^{n-2\alpha}|y|^{(n-2\alpha)q_{1}}}\left(\int_{|x|\le |z|<|y|}\frac{|z|^{(n-2\alpha)(\frac{1}{\gamma_{2}})}d\sigma(z)}{|z|^{n-2\alpha}}\right)^{q_{1}\gamma_{2}}
d\sigma(y)\\
&\le c\,\frac{1}{|x|^{(n-2\alpha)q_{1}}}\left(\int_{|z|<|x|}\frac{d\sigma(z)}{|z|^{(n-2\alpha)r_{1}}|z|^{(n-2\alpha)(r_{2}-r_{1})}}\right)^{q_{1}\gamma_{2}}\int_{|y|\ge|x|} \frac{d\sigma(y)}{|y|^{n-2\alpha}}\\
&+c\,\left(\int_{|x|\le |z|}\frac{d\sigma(z)}{|z|^{n-2\alpha}}\right)^{q_{1}\gamma_{2}}\int_{|y|\ge|x|} \frac{d\sigma(y)}{|y|^{(n-2\alpha)}}\\
&\le c\,\frac{1}{|x|^{(n-2\alpha)q_{1}(1+\gamma_{2}(r_{2}-r_{1}))}}\left(\int_{|z|<|x|}\frac{d\sigma(z)}{|z|^{(n-2\alpha)r_{1}}}\right)^{q_{1}\gamma_{2}}\int_{|y|\ge|x|} \frac{d\sigma(y)}{|y|^{n-2\alpha}}\\
&+c\,\left(\int_{|y|\geq |x|}\frac{d\sigma(y)}{|y|^{n-2\alpha}}\right)^{\gamma_{1}}\\
&\le c \left(\,\frac{1}{|x|^{(n-2\alpha)}}\left(\int_{|z|<|x|}\frac{d\sigma(z)}{|z|^{(n-2\alpha)r_{1}}}\right)^{\gamma_{1}}+\left(\int_{|y|\ge|x|} \frac{d\sigma(y)}{|y|^{n-2\alpha}}\right)^{\gamma_{1}}\right)\\
&+c\,\left(\int_{|y|\geq |x|}\frac{d\sigma(y)}{|y|^{n-2\alpha}}\right)^{\gamma_{1}}
\end{aligned}
\end{equation*}
Where we applied Young's inequality in the last inequality. Finally, notice that 
\begin{equation*}
IV \leq \left(\int_{|y|\geq |x|}\frac{d\sigma(y)}{|y|^{n-2\alpha}}\right)^{\gamma_{1}}.
\end{equation*} 
By choosing $\lambda$ large enough we then guarantee that $\ia(\overline{v}^{q_{1}}d\sigma) \le \overline{u}$, and by symmetry it's also possible to conclude that $\ia(\overline{u}^{q_{2}}d\sigma) \le \overline{v}$. 

To obtain a solution $(u,v)$ we use the standard iteration argument of the sub-sup solution method of PDEs and the monotone convergence theorem. We reproduce the full argument for the convenience of the reader.

Let $u_0=\underline{u}$ and $v_0=\underline{v}$. Clearly, $u_0\leq \overline{u}$ and $v_0\leq \overline{v}$. Set $u_1=\ia(v_0^{q_1}\mathrm{d}\sigma)$ and $v_1=\ia(u_0^{q_2}\mathrm{d}\sigma)$. We have $u_1\geq u_0$ and $v_1\geq v_0$. We interate this process and obtain a sequence of pair of functions $(u_j,v_j)$  such that
\begin{equation}
\left\{\begin{aligned}
&u_j=\ia(v_{j-1}^{q_1}\mathrm{d} \sigma) \quad \mbox{in}\quad \mathds{R}^n,\\
& v_j=\ia(u_{j-1}^{q_1}\mathrm{d} \sigma)\quad \mbox{in}\quad \mathds{R}^n, 
\end{aligned}\right.    
\end{equation}
By induction, the sequences $\{u_j\}$ and $\{v_j\}$ are nondecreasing, with $\underline{u}\leq u_j\leq\overline{u}$ and $\underline{v}\leq v_j\leq\overline{v}$ (for $j=0,1,\ldots$). 

Using the  Monotone Convergence Theorem and taking the limit as $j\to \infty$, we see that there exist nonnegative functions $u=\lim u_j$ and $v=\lim v_j$ such that $(u,v)$ is a solution satisfying $\underline{u}\leq u\leq\overline{u}$ and $\underline{v}\leq v\leq\overline{v}$. In particular, $u$ and $v$ satisfies \eqref{uvbound}. 
\end{enumerate}
\end{proof}
\section{Proof of theorem 1}\label{main_proof}
Consider the following condition:
\begin{equation}\label{c114}
\begin{aligned}
&    \ia\left(\left(\ia\sigma\right)^{q_{1}\gamma_{2}} \mathrm{d}\sigma\right)\leq c \left(\ia\sigma + \left(\ia\sigma\right)^{\gamma_{1}} \right), \\
& \ia\left(\left(\ia\sigma\right)^{q_{2}\gamma_{1}}\mathrm{d} \sigma\right)\leq c \left(\ia\sigma + \left(\ia\sigma\right)^{\gamma_{2}} \right).
\end{aligned}
 \end{equation}
 According to \cite[Thm. 1.3]{bib1}, these conditions are necessary and sufficient for the existence of solutions satisfying a Brezis-Kamin type estimates \eqref{estimateDoO}. Set
 \begin{equation}
 \begin{aligned}
 \mathbf{K_{1}}\sigma(x) &= \frac{1}{|x|^{n-2\alpha}}\left(\int_{|y|<|x|}\frac{d\sigma(y)}{|y|^{(n-2\alpha)r_{1}}}\right)^{\gamma_{1}}, \quad x \neq 0\\
  \mathbf{K_{2}}\sigma(x) &= \frac{1}{|x|^{n-2\alpha}}\left(\int_{|y|<|x|}\frac{d\sigma(y)}{|y|^{(n-2\alpha)r_{2}}}\right)^{\gamma_{2}}, \quad x \neq 0
 \end{aligned}
 \end{equation}
 \begin{lemma}
 Condition \eqref{c114} is equivalent to 
  \begin{equation}\label{con2}
     \begin{aligned}
    & \mathbf{K_{1}}\sigma(x) \leq c\left(\mathbf{I}_{2\alpha}\sigma + \left(\mathbf{I}_{2\alpha}\sigma\right)^{\gamma_{1}} \right)<\infty,\;\; \text{a.e.}\\
   &  \mathbf{K_{2}}\sigma(x)\leq c\left(\mathbf{I}_{2\alpha}\sigma + \left(\mathbf{I}_{2\alpha}\sigma\right)^{\gamma_{2}} \right)<\infty,\;\; \text{a.e.}
    \end{aligned}
 \end{equation}
 \end{lemma}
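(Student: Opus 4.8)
The plan is to prove the equivalence by exploiting the radial formula for the Riesz potential recalled in the proof of Theorem~\ref{con1}, namely $\ia\sigma(x)\approx \sigma(B(0,|x|))/|x|^{n-2\alpha}+\int_{|y|\ge|x|}|y|^{-(n-2\alpha)}\,d\sigma(y)$. The direction \eqref{con2}$\Rightarrow$\eqref{c114} should be the straightforward one: assuming the pointwise bounds $\mathbf{K_i}\sigma\le c(\ia\sigma+(\ia\sigma)^{\gamma_i})$, I would estimate $\ia((\ia\sigma)^{q_1\gamma_2}d\sigma)$ by splitting the Riesz potential into its near part ($|y|<|x|$) and far part ($|y|\ge|x|$) and, on each piece, bounding $(\ia\sigma(y))^{q_1\gamma_2}$ from above using the radial formula again and the elementary inequality $(a+b)^s\le c(a^s+b^s)$. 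The resulting integrals are exactly the pieces $I$--$IV$ already controlled in the proof of Theorem~\ref{con1}; collecting them and using Young's inequality with exponents $\gamma_1$ and $(1+q_1)/(q_1+q_1q_2)$ (as in the $II_b$ and $III$ estimates there) yields the right-hand side $c(\ia\sigma+(\ia\sigma)^{\gamma_1})$. The same reasoning with indices $1\leftrightarrow 2$ handles the second line of \eqref{c114}.

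For the converse \eqref{c114}$\Rightarrow$\eqref{con2}, the idea is to show that $\mathbf{K_1}\sigma(x)$ is, up to a constant, \emph{dominated} by $\ia((\ia\sigma)^{q_1\gamma_2}d\sigma)(x)$, so that the hypothesis immediately gives the conclusion. Concretely, restrict the defining integral of $\ia((\ia\sigma)^{q_1\gamma_2}d\sigma)$ to the ball $|y|<|x|$ and keep only the near part of $\ia\sigma(y)$; since for $|y|<|x|$ one has $\ia\sigma(y)\gtrsim |y|^{-(n-2\alpha)}\sigma(B(0,|y|))\gtrsim |y|^{-(n-2\alpha)}\int_{|z|<|y|}|z|^{-(n-2\alpha)(r_2-r_1)}\,d\sigma(z)\cdot|y|^{(n-2\alpha)(r_2-r_1)}$ type bounds, raising to the power $q_1\gamma_2$ and integrating $d\sigma(y)$ over $|y|<|x|$ and dividing by $|x|^{n-2\alpha}$ reconstructs $(\int_{|y|<|x|}|y|^{-(n-2\alpha)r_1}\,d\sigma(y))^{\gamma_1}/|x|^{n-2\alpha}=\mathbf{K_1}\sigma(x)$, using the arithmetic identities $\gamma_1=q_1\gamma_2+1$ and $r_i=1-1/\gamma_i$ together with a Fubini/telescoping argument on the nested ball integrals. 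The finiteness ``$<\infty$ a.e.'' in \eqref{con2} then follows because the right-hand side $\ia\sigma+(\ia\sigma)^{\gamma_i}$ is finite a.e.\ under \eqref{FinPot}.

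The main obstacle I anticipate is the bookkeeping in this lower bound: one must correctly track the powers of $|y|$ produced when rewriting $|z|^{-(n-2\alpha)r_2}=|z|^{-(n-2\alpha)r_1}\cdot|z|^{-(n-2\alpha)(r_2-r_1)}$ and pulling the factor $|z|^{-(n-2\alpha)(r_2-r_1)}\le |y|^{-(n-2\alpha)(r_2-r_1)}$ (or its reverse, depending on the sign of $r_2-r_1$) out of the inner integral, and then verify that the exponent $q_1(1+\gamma_2(r_2-r_1))$ appearing on $|y|$ combines with the measure $d\sigma(y)$ and the weight $|y|^{-(n-2\alpha)r_1}$ to give precisely $|y|^{-(n-2\alpha)r_1}$ after the Fubini exchange, i.e.\ that the algebra closes with exponent exactly $\gamma_1$. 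This is the same computation that underlies the bound for term $I$ in the proof of Theorem~\ref{con1}, run in reverse, so once the direction of the inequalities is pinned down the identity $1+\gamma_2(r_2-r_1)=1/q_1\cdot(\gamma_1-1)/\gamma_2\cdot\ldots$ can be checked directly from $\gamma_1=q_1\gamma_2+1$, $\gamma_2=q_2\gamma_1+1$. A secondary point to be careful about is the behaviour at $x=0$ and the ``a.e.''\ qualifier, but since $\mathbf{K_i}\sigma$ is only defined for $x\ne 0$ and $\ia\sigma$ is lower semicontinuous, this causes no real difficulty.
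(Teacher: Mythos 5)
Your approach is genuinely different from the paper's, and I want to flag both the difference and a gap.

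The paper's proof is a \emph{soft} argument: it invokes \cite[Thm.\ 1.3]{bib1}, which says \eqref{c114} is equivalent to the existence of a solution of \eqref{FracLap} satisfying the Brezis--Kamin bounds \eqref{estimateDoO}, together with Theorem~\ref{con1}, which supplies the two-sided estimate $u\approx\mathbf{K_1}\sigma+\bigl(\int_{|y|\ge|x|}|y|^{-(n-2\alpha)}d\sigma\bigr)^{\gamma_1}$ for the radial solution, and Remark~\ref{lowB} for the lower bound $v\gtrsim(\ia\sigma)^{\gamma_2}$. Each implication in the lemma then drops out with essentially no computation. Your plan is instead a direct pointwise computation, which is a legitimate alternative route, but it needs to reproduce the same quantitative content that the paper outsources to Theorem~\ref{con1}.

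For \eqref{con2}\,$\Rightarrow$\,\eqref{c114} your plan is viable. After noting that the near/far decomposition gives $(\ia\sigma)^{\gamma_2}\lesssim\mathbf{K_2}\sigma+(\text{far})^{\gamma_2}$ (using $(\sigma(B(0,|y|))/|y|^{n-2\alpha})^{\gamma_2}\le\mathbf{K_2}\sigma(y)$, which follows from $r_2\gamma_2=\gamma_2-1$), the estimate of $\ia((\ia\sigma)^{q_1\gamma_2}d\sigma)$ does reduce to the pieces $I$--$IV$ and the Young's inequality steps in the proof of Theorem~\ref{con1}, and then the hypothesis on $\mathbf{K_1}\sigma$ finishes.

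The direction \eqref{c114}\,$\Rightarrow$\,\eqref{con2}, however, has a genuine gap. You propose to establish $\mathbf{K_1}\sigma(x)\lesssim\ia\bigl((\ia\sigma)^{q_1\gamma_2}d\sigma\bigr)(x)$ by restricting the outer integral to $|y|<|x|$ and keeping only the near part of $\ia\sigma(y)$. But the telescoping along the radial formula gives the \emph{opposite} inequality. Writing $F(t)=\int_{|y|<t}|y|^{-(n-2\alpha)r_1}d\sigma(y)$, one has $\sigma(B(0,t))\le t^{(n-2\alpha)r_1}F(t)$, and using $q_1\gamma_2(1-r_1)=r_1$ one gets
\begin{equation*}
\frac{1}{|x|^{n-2\alpha}}\int_{|y|<|x|}\Bigl(\frac{\sigma(B(0,|y|))}{|y|^{n-2\alpha}}\Bigr)^{q_1\gamma_2}d\sigma(y)\le\frac{1}{|x|^{n-2\alpha}}\int_0^{|x|}F(t)^{\gamma_1-1}\,dF(t)=\frac{1}{\gamma_1}\,\mathbf{K_1}\sigma(x),
\end{equation*}
so the near-near piece is bounded \emph{above} by a fraction of $\mathbf{K_1}\sigma$ and cannot supply the lower bound. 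In addition, the specific chain you write, $\sigma(B(0,|y|))\gtrsim|y|^{(n-2\alpha)(r_2-r_1)}\int_{|z|<|y|}|z|^{-(n-2\alpha)(r_2-r_1)}d\sigma(z)$, holds only when $r_2<r_1$ (equivalently $q_2<q_1$); for the other sign of $r_2-r_1$ the pointwise inequality flips, so the argument as stated is not available in general. You would need either a different decomposition (likely bringing in the far part of $\ia\sigma$ in a nontrivial way), or to fall back on the paper's indirect route through Theorem~\ref{con1}.
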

 \begin{proof}
 Suppose \eqref{c114} holds. Then the there is a solution pair $(u,v)$ satisfying \eqref{estimateDoO}, moreover by theorem \ref{con1} they satisfy:
 \begin{equation}
     \begin{aligned}
    & \mathbf{K_{1}}\sigma(x) \leq u\leq c\left(\mathbf{I}_{2\alpha}\sigma + \left(\mathbf{I}_{2\alpha}\sigma\right)^{\gamma_{1}} \right),\\
   &  \mathbf{K_{2}}\sigma(x)\leq v\leq c\left(\mathbf{I}_{2\alpha}\sigma + \left(\mathbf{I}_{2\alpha}\sigma\right)^{\gamma_{2}} \right),
    \end{aligned}
\end{equation}
which implies \eqref{con2}.

Conversely, suppose \eqref{con2} holds. Using theorem \ref{con1} again, we can guarantee the existence of a solution $(u,v)$ satisfying:
\begin{equation}
     \begin{aligned}
    & u(x)\leq c\left (\mathbf{K_{1}}\sigma(x)+ \left(\ia\sigma\right)^{\gamma_{1}} \right )\leq c\left (\ia\sigma(x)+ \left(\ia\sigma\right)^{\gamma_{1}} \right )\\
   & v(y)\leq c  \left (\mathbf{K_{2}}\sigma(x) + \left(\ia\sigma\right)^{\gamma_{2}} \right ) \leq c\left (\ia\sigma(x)+ \left(\ia\sigma\right)^{\gamma_{2}} \right )
        \end{aligned}
\end{equation}
and by remark \ref{lowB}, we also have:
\begin{equation}
     \begin{aligned}
    &  u(x)= \ia (v^{q_{1}}d\sigma) \geq \ia\left(\left(\ia\sigma\right)^{q_{1}\gamma_{2}} \mathrm{d}\sigma\right)\\
   &  v(x)=\ia (u^{q_{2}}d\sigma) \geq \ia\left(\left(\ia\sigma\right)^{q_{2}\gamma_{1}} \mathrm{d}\sigma\right)\\
        \end{aligned}
\end{equation}
which completes the proof.
 \end{proof}
 We conclude the proof of the theorem with the following lemma:
 \begin{lemma}
 Condition \eqref{con2} is equivalent to
 \begin{equation} \label{con3}
\int_{|y| \geq 1}\frac{d\sigma(y)}{|y|^{n-2\alpha} } < \infty\text{ and }\;
\limsup\limits_{|x| \to 0} \frac{\frac{1}{|x|^{(n-2\alpha)\frac{1}{\gamma_{i}}}}\int_{B(0,|x|)} \frac{d\sigma(y)}{|y|^{r_{i}(n-2\alpha)}} }{ \int_{|y| \geq |x|} \frac{d\sigma(y)}{|y|^{n-2\alpha} }} < \infty, i=1,2.
\end{equation}
 \end{lemma}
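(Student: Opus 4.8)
The plan is to turn the two conditions into a one–variable problem. Put $\beta:=n-2\alpha>0$ and, for $t>0$, write $m(t)=\sigma(B(0,t))$, $A_i(t)=\int_{|y|<t}|y|^{-\beta r_i}\,d\sigma(y)$, $B(t)=\int_{|y|\ge t}|y|^{-\beta}\,d\sigma(y)$ and $g_i(t)=A_i(t)\,t^{-\beta/\gamma_i}$. For $t=|x|$ one then has $\mathbf{K}_{i}\sigma(x)=g_i(t)^{\gamma_i}$, the numerator and denominator of the quotient in \eqref{con3} equal $g_i(t)$ and $B(t)$, and, by the radial formula for $\ia\sigma$ recalled in the proof of Theorem~\ref{con1}, $\ia\sigma(x)\approx \frac{m(t)}{t^{\beta}}+B(t)$. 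I would first record some elementary facts: under either \eqref{con2} or \eqref{con3} one has $B(1)<\infty$ and $A_i(t)<\infty$ for all $t>0$ (by monotonicity these are exactly the a.e.\ finiteness of $\ia\sigma$, resp.\ of $\mathbf{K}_{i}\sigma$), so $\sigma$ carries no mass at the origin; and, writing $B(s)=\int_{s\le|z|<t}|z|^{-\beta}d\sigma+B(t)$ and applying Fubini, one gets
\[
\tfrac1\beta\Big(\tfrac{m(t)}{t^{\beta}}+B(t)\Big)=J_{\beta}(t),\qquad \tfrac{\gamma_i}{\beta}\big(g_i(t)+B(t)\big)=J_{\beta/\gamma_i}(t),\qquad\text{where}\quad J_a(t):=\tfrac1{t^{a}}\int_0^{t}B(s)\,s^{a-1}\,ds.
\]
In particular each $J_a$ is finite and continuous on $(0,\infty)$, $J_a(t)\ge B(t)/a$, and $\frac{m(t)}{t^{\beta}}\le g_i(t)$. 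Since both \eqref{con2} and \eqref{con3} contain the tail bound $B(1)<\infty$, and since away from the origin the remaining inequalities hold by routine estimates based on $A_i(t)<\infty$, the whole matter reduces to the behaviour as $t\to0$.

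I would then split according to whether $B(0^+)=\int_{\R^n}|y|^{-\beta}\,d\sigma$ is finite. If $B(0^+)<\infty$, then $\frac{m(t)}{t^{\beta}}+B(t)=\beta J_{\beta}(t)\le B(0^+)$ (since $B\le B(0^+)$) while $B(t)\uparrow B(0^+)$ as $t\to0$, so $\ia\sigma(x)\approx B(0^+)$, a positive constant, for small $|x|$. Hence near the origin \eqref{con2} just says that $\mathbf{K}_{i}\sigma=g_i^{\gamma_i}$ is bounded, while \eqref{con3} says $\limsup_{t\to0}g_i(t)/B(t)=\tfrac1{B(0^+)}\limsup_{t\to0}g_i(t)<\infty$; by continuity of $g_i$ both amount to $\sup_{0<t<t_0}g_i(t)<\infty$ for some $t_0$, so \eqref{con2}$\Leftrightarrow$\eqref{con3} in this case.

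The real content is the case $B(0^+)=\infty$, where $\ia\sigma(x)\ge cB(|x|)\to\infty$ as $|x|\to0$. Then $\ia\sigma\ge1$ near the origin, so \eqref{con2} there reads $g_i(t)^{\gamma_i}\le 2c\,(\ia\sigma(t))^{\gamma_i}$, i.e.\ $g_i(t)\le c'\big(\tfrac{m(t)}{t^{\beta}}+B(t)\big)$; combining this with the identities above and $B\le\beta J_{\beta}$ shows it is equivalent to $J_{\beta/\gamma_i}(t)\le C\,J_{\beta}(t)$ for all small $t$ (the a.e.\ inequality becomes an everywhere one by continuity of the $J$'s). So the theorem reduces to the self-improvement that I view as the heart of the proof: \emph{if $0<a<b$ and $B$ is non-increasing with $J_a,J_b$ finite, then $J_a(t)\le C\,J_b(t)$ for $t<t_0$ implies $J_a(t)\le C'\,B(t)$ for $t<t_1$.} To prove it, write $J_a(t)=\int_0^{1}B(tu)\,u^{a-1}\,du$, pick $\theta=2^{-k}$ with $2^{k(b-a)}>C$, and split $\int_0^1=\int_0^{\theta}+\int_{\theta}^1$: using $u^{b-1}\le\theta^{b-a}u^{a-1}$ on $(0,\theta)$ and $B(tu)\le B(\theta t)$ on $(\theta,1)$ gives $J_b(t)\le\theta^{b-a}Q+\tfrac1b B(\theta t)$ with $Q:=\int_0^{\theta}B(tu)u^{a-1}du=\theta^{a}J_a(\theta t)\le J_a(t)$; feeding this into $J_a(t)\le CJ_b(t)$ and using $C\theta^{b-a}<1$ yields $\theta^{a}J_a(\theta t)\le\frac{C}{b(1-C\theta^{b-a})}B(\theta t)$, which is the claim with $t_1=\theta t_0$. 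Applying this with $a=\beta/\gamma_i$, $b=\beta$ (so $b-a=\beta r_i>0$) gives $J_{\beta/\gamma_i}(t)\le C'B(t)$ near $0$, hence $g_i(t)\le\tfrac{\beta}{\gamma_i}J_{\beta/\gamma_i}(t)\le\tfrac{\beta C'}{\gamma_i}B(t)$ near $0$, i.e.\ \eqref{con3}. Conversely, if $g_i(t)\le C_1B(t)$ near $0$ then $\mathbf{K}_{i}\sigma=g_i^{\gamma_i}\le C_1^{\gamma_i}B^{\gamma_i}\le C_1^{\gamma_i}\big(\tfrac{m}{t^{\beta}}+B\big)^{\gamma_i}\le c\,(\ia\sigma+(\ia\sigma)^{\gamma_i})$ near $0$, and away from $0$ one argues directly, so \eqref{con2} holds.

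The main obstacle is precisely the self-improvement $J_a\lesssim J_b\Rightarrow J_a\lesssim B$: trying to absorb $m(t)/t^{\beta}$ into $g_i(t)$ using only $m(t)/t^{\beta}\le g_i(t)$ loses a constant and fails, and the remedy is to descend to a dyadic sub-scale $2^{-k}$ fine enough that the contraction factor $C2^{-k(b-a)}$ falls below $1$, the geometric gain of the weight $u^{b-a}$ on $(0,2^{-k})$ then closing the estimate. The auxiliary points — the Fubini identities, the absence of an atom at the origin, and the bookkeeping away from the origin — are routine.
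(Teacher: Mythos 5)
Your proof is correct and structurally parallels the paper's: both reduce to the $t=|x|\to 0$ behaviour in the nontrivial case where $\ia\sigma$ is unbounded near the origin, and both close the direction \eqref{con2}$\Rightarrow$\eqref{con3} by splitting the mass integral at a sub-scale $k|x|$ and absorbing. The genuine refinement in your write-up is recasting everything through the Fubini identities $J_\beta(t)=\frac{1}{\beta}\left(\frac{m(t)}{t^{\beta}}+B(t)\right)$ and $J_{\beta/\gamma_i}(t)=\frac{\gamma_i}{\beta}\left(g_i(t)+B(t)\right)$, which reduces the whole matter to a clean self-improvement lemma: for nonincreasing $B$ and $0<a<b$, $J_a\lesssim J_b$ near $0$ forces $J_a\lesssim B$ near $0$. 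This is more than cosmetic. In the paper's version, splitting $\int_{|y|<|x|}d\sigma$ at $|y|=k|x|$ leaves the shell contribution $\int_{k|x|\le|y|<|x|}|y|^{-\beta}\,d\sigma$ (the displayed $\int_{|x|\le|y|}$ there is a slip), so after absorbing one obtains $g_i(t)\lesssim B(kt)$ rather than the claimed $g_i(t)\lesssim B(t)$, and passing from $B(kt)$ to $B(t)$ is not automatic since $\sigma$ is not assumed doubling. Your argument sidesteps this because $Q=\int_0^{\theta}B(tu)u^{a-1}\,du$ satisfies simultaneously $Q\le J_a(t)$ and $Q=\theta^{a}J_a(\theta t)$, so the contraction inequality $(1-C\theta^{b-a})Q\le\frac{C}{b}B(\theta t)$ already delivers $J_a(\theta t)\lesssim B(\theta t)$ at the \emph{same} evaluation point, hence $g_i(s)\lesssim B(s)$ for all small $s$. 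Two minor remarks: your explicit handling of the degenerate case $B(0^{+})<\infty$ is tidier than the paper's one-line dismissal; and the ``routine'' bookkeeping for \eqref{con3}$\Rightarrow$\eqref{con2} away from the origin should be written out rather than asserted --- for $|x|>1$ it requires the H\"older step with exponents $r_i$ and $1-r_i$ giving $\mathbf{K}_{i}\sigma\lesssim(\ia\sigma)^{\gamma_i}$, and mere finiteness of $A_i$ does not suffice there since $\ia\sigma\to 0$ at infinity --- but that part poses no real difficulty.
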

 \begin{proof}
 Since $\lim_{|x| \to \infty} \ia\sigma(x) = 0$, it's enough to see what happens when $\limsup\limits_{|x| \to 0}  \ia\sigma(x)=\infty$, otherwise $\ia\sigma$ is bounded and \eqref{c114} holds trivially. 
 
 The proof is similar to the single equation case treated in \cite[Prop. 5.2]{verb16}. Suppose \eqref{con3} holds, then for x sufficiently small, say $0<|x|<\delta<1$ we have:
 $$\mathbf{K}_{i}\sigma(x) \le c \, (\ia\sigma(x))^{\gamma_{i}} \quad \text{for all}  \,\,  0 < |x| < \delta.$$
 If $0<\delta<|x|<1$ we have:
 $$\mathbf{K}_{i}\sigma(x) \leq \frac{1}{\delta^{n-2\alpha}} \left(\int_{|y|<1}\frac{d\sigma(y)}{|y|^{(n-2\alpha)r_{i}}}\right)^{\gamma_{i}}<c\;\ia\sigma(x)$$
 Now, for $|x|>1$:
 $$\int_{|y|<|x|}\frac{d\sigma(y)}{|y|^{(n-2\alpha)r_{i}}}\leq \int_{0\leq |y|\leq |1|}\frac{d\sigma(y)}{|y|^{(n-2\alpha)r_{i}}}+\int_{1\leq |y|\leq |x|}\frac{d\sigma(y)}{|y|^{(n-2\alpha)r_{i}}}$$
 The first term is clearly bounded by $\ia\sigma(x)$, we analyze the second term. Using Holder's inequality we obtain:
  $$\int_{1\leq |y|\leq |x|}\frac{d\sigma(y)}{|y|^{(n-2\alpha)r_{i}}}\leq\left( \int_{1\leq |y|\leq |x|}d\sigma(y)\right)^{1-\frac{1}{r_{i}}} \left(\int_{1\leq |y|\leq |x|}\frac{d\sigma(y)}{|y|^{(n-2\alpha)}}\right)^{\frac{1}{r_{i}}}$$
  Therefore,
  $$\left(\int_{1\leq |y|\leq |x|}\frac{d\sigma(y)}{|y|^{(n-2\alpha)r_{i}}}\right)^{\gamma_{i}} \leq\left( \int_{|y|\leq |x|}d\sigma(y)\right)^{\gamma_{i}(1-\frac{1}{r_{i}})} \left(\int_{1\leq |y|\leq |x|}\frac{d\sigma(y)}{|y|^{(n-2\alpha)}}\right)^{\frac{\gamma_{i}}{r_{i}}}$$
  and again we have:
  $$\mathbf{K}_{i}\sigma(x) \le c \, (\ia\sigma(x))$$
  Combining all cases together, we conclude that 
    $$\mathbf{K}_{i}\sigma(x) \le c \, \left(\ia\sigma(x)+(\ia\sigma(x))^{\gamma_{i}}\right)<\infty$$
   Now suppose \eqref{con2} holds. Since we are assuming $\limsup\limits_{|x| \to 0}  \ia\sigma(x)=\infty$, for $x$ sufficiently small we have
   $$\ia\sigma(x)\leq (\ia\sigma(x))^{\gamma_{i}},$$
   hence
   $$\mathbf{K}_{i}\sigma(x) \le c \, (\ia\sigma(x))^{\gamma_{i}},$$
   or equivalently,
   \begin{equation}\label{con5}
   \frac{1}{|x|^{(n-2\alpha)\frac{1}{\gamma_{i}}}}\left(\int_{|y|<|x|}\frac{d\sigma(y)}{|y|^{(n-2\alpha)r_{i}}}\right)\leq c\left(\frac{1}{|x|^{n-2\alpha}}
   \int_{|y|<|x|}d\sigma(y)  
+ \int_{|y|\geq |x|} \frac{d\sigma(y)}{|y|^{n-2\alpha} } \right)
\end{equation}
It suffices to estimate the first term. Let $d=f(|x|)$ be a function to be determined later, we have
\begin{equation}
     \begin{aligned}
     \frac{1}{|x|^{n-2\alpha}}\int_{|y|<|x|}d\sigma(y)&=\frac{1}{|x|^{(n-2\alpha)(1-r_{i})}|x|^{(n-2\alpha)r_{i}}}\int_{|y|<d}\frac{|y|^{(n-2\alpha)r_{i}}}{|y|^{(n-2\alpha)r_{i}}}d\sigma(y)\\
     &+ \frac{1}{|x|^{n-2\alpha}}\int_{d<|y|<x}\frac{|y|^{n-2\alpha}}{|y|^{n-2\alpha}}d\sigma(y)\\	
     &\leq \frac{d^{(n-2\alpha)r_{i}}}{|x|^{(n-2\alpha)(1-r_{i})}|x|^{(n-2\alpha)r_{i}}}\int_{|y|<d}\frac{1}{|y|^{(n-2\alpha)r_{i}}}d\sigma(y)\\
     &+\int_{d<|y|<x}\frac{d\sigma(y)}{|y|^{n-2\alpha}}\\	
     \end{aligned}
\end{equation}
Set $d=k|x|$, for $k$ small enough we have
\begin{equation}
     \begin{aligned}
     \frac{1}{|x|^{n-2\alpha}}\int_{|y|<|x|}d\sigma(y)&\leq \frac{k^{(n-2\alpha)r_{i}}}{|x|^{(n-2\alpha)(\frac{1}{\gamma_{i}})}}\int_{|y|<|x|}\frac{1}{|y|^{(n-2\alpha)r_{i}}}d\sigma(y)\\
     &+\int_{|x|\leq |y|}\frac{d\sigma(y)}{|y|^{n-2\alpha}}\\	
     \end{aligned}
\end{equation}
 Together with \eqref{con5}, this implies that 
 $$ \frac{\frac{1}{|x|^{(n-2\alpha)\frac{1}{\gamma_{i}}}}\int_{B(0,|x|)} \frac{d\sigma(y)}{|y|^{r_{i}(n-2\alpha)}} }{ \int_{|y| \geq |x|} \frac{d\sigma(y)}{|y|^{n-2\alpha} }} < c,\;\; i=1,2.$$
 for some constant $c$ and $x$ sufficiently small, which completes the proof.
 \end{proof}
 \begin{remark}
 As explained in \cite{verb16} in the case $u=v$ and $q_{1}=q_{2}=q$, it's possible to find $\sigma$ satisfying \ref{radialcond} but not \eqref{limc}. So we can have solutions that do not satisfy the Brezis-Kamin type estimates \eqref{estimateDoO}. 
 \end{remark}
  \section{Proof of Theorem 2}\label{pt2}
  
 In this section we analyze the system:
 \begin{equation}\label{inho}
    \left\{
\begin{aligned}
& u= \ia\left(v^{q_1}\mathrm{d} \sigma + d\mu_1\right), \\
& v= \ia\left(u^{q_2}\mathrm{d} \sigma+ d\mu_2\right),
\end{aligned}
\right.
\end{equation}
and give the proof of theorem \ref{t2}.

The proof is based on the proof of \cite[Thm 1]{bib1} \textit{mutatis mutandis} because of the condition $\ia\mu_{i}\leq C \ia\sigma$. Although not obvious, we'll see that this condition enables the existence of a supersolution.
\subsection*{A subsolution $(\underline{u},\underline{v})$}
Recall the following lemma from \cite{verb17} (simplified to our setting):
\begin{lemma}
    Let $\sigma \in M^+(\mathds{R}^n)$. For every $r>0$ and for all $x\in \mathds{R}^n$, it holds
    \begin{equation}
        \ia\left((\ia\sigma)^r\mathrm{d} \sigma\right)(x)\geq \kappa^{r}\left(\ia\omega(x)\right)^{r+1},
    \end{equation}
    where $\kappa$ depends only on $n, p$ and $\alpha$.
\end{lemma}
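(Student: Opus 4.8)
The plan is to establish the inequality at an arbitrary fixed $x\in\R^n$ by combining a dyadic decomposition of $\R^n$ around $x$ with a one–dimensional ``layer–cake'' summation. I would decompose $\R^n\setminus\{x\}$ into the annuli $A_j=\{\,y:2^{j}\le|y-x|<2^{j+1}\,\}$, $j\in\mathds{Z}$ (if $\sigma(\{x\})>0$ both sides are infinite, so we may discard the point $x$), and set
\[
 b_j:=2^{-j(n-2\alpha)}\sigma(A_j),\qquad \beta_j:=\sum_{k\ge j}b_k,\qquad \beta_{-\infty}:=\sum_{k\in\mathds{Z}}b_k .
\]
Since $|y-x|\ge 2^{j}$ on $A_j$, one has $\ia\sigma(x)=\sum_j\int_{A_j}\frac{d\sigma(y)}{|x-y|^{n-2\alpha}}\le\sum_j b_j=\beta_{-\infty}$, so it is enough to bound $\ia\big((\ia\sigma)^{r}d\sigma\big)(x)$ below by a constant times $\beta_{-\infty}^{\,r+1}$.

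The heart of the argument will be the pointwise lower bound: for every $y\in A_j$,
\[
 \ia\sigma(y)\ \ge\ \int_{|z-x|\ge 2^{j}}\frac{d\sigma(z)}{|y-z|^{n-2\alpha}}\ \ge\ 2^{-2(n-2\alpha)}\,\beta_j .
\]
This holds because for $z\in A_k$ with $k\ge j$ and $y\in A_j$ one has $|y-z|\le|y-x|+|x-z|<2^{j+1}+2^{k+1}\le 2^{k+2}$, hence $|y-z|^{-(n-2\alpha)}\ge 2^{-2(n-2\alpha)}2^{-k(n-2\alpha)}$; integrating over $A_k$ and summing over $k\ge j$ gives the claim. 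Combining this with $\int_{A_j}\frac{d\sigma(y)}{|x-y|^{n-2\alpha}}\ge 2^{-(n-2\alpha)}b_j$ (valid since $|x-y|<2^{j+1}$ on $A_j$) yields
\[
 \ia\big((\ia\sigma)^{r}d\sigma\big)(x)=\sum_{j}\int_{A_j}\frac{(\ia\sigma(y))^{r}}{|x-y|^{n-2\alpha}}\,d\sigma(y)\ \ge\ 2^{-(2r+1)(n-2\alpha)}\sum_{j}\beta_j^{\,r}\,b_j .
\]

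It will then remain to prove the elementary scalar inequality $\sum_{j}\beta_j^{\,r}b_j\ \ge\ \frac{1}{r+1}\,\beta_{-\infty}^{\,r+1}$ for an arbitrary nonnegative sequence $(b_j)_{j\in\mathds{Z}}$ with $\beta_j=\sum_{k\ge j}b_k$. Since $(\beta_j)$ is nonincreasing, $\beta_{+\infty}=0$, and $t\mapsto t^{r}$ is increasing for $r>0$, we have $\beta_j^{\,r}b_j=\beta_j^{\,r}(\beta_j-\beta_{j+1})\ge\int_{\beta_{j+1}}^{\beta_j}t^{r}\,dt$, and summing over $j$ telescopes to $\int_{0}^{\beta_{-\infty}}t^{r}\,dt=\frac{1}{r+1}\beta_{-\infty}^{\,r+1}$ (the computation, and the conclusion, also cover the case $\beta_{-\infty}=\infty$). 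Putting the last three displays together and using $\beta_{-\infty}\ge\ia\sigma(x)$ gives $\ia\big((\ia\sigma)^{r}d\sigma\big)(x)\ge\kappa\,(\ia\sigma(x))^{r+1}$ with $\kappa=2^{-(2r+1)(n-2\alpha)}/(r+1)>0$ depending only on $n$, $\alpha$ and $r$.

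The step I expect to be the main obstacle is the pointwise estimate $\ia\sigma(y)\gtrsim\beta_j$ on $A_j$: using only the single scale $2^{j}$ (i.e.\ the bound $\ia\sigma(y)\gtrsim 2^{-j(n-2\alpha)}\sigma(B(x,2^{j+1}))$) produces the exponent $r$, not $r+1$, on the right, so one must retain the contribution of \emph{all} scales $k\ge j$ — which the triangle inequality allows, since $|y-z|\lesssim|x-z|$ as soon as $|x-z|\gtrsim|x-y|$. The remaining ingredients, namely the scalar layer–cake identity and the tracking of constants, are routine.
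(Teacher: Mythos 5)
The paper does not reprove this lemma: it is cited directly from \cite{verb17} ($\omega$ on the right-hand side is evidently a typo for $\sigma$, which you read correctly), so there is no in-text proof to compare against. Your dyadic argument is substantively correct: the pointwise bound $\ia\sigma(y)\ge 4^{-(n-2\alpha)}\beta_j$ on $A_j$, the telescoping scalar estimate $\sum_j\beta_j^{\,r}b_j\ge(r+1)^{-1}\beta_{-\infty}^{\,r+1}$, and $\beta_{-\infty}\ge\ia\sigma(x)$ are all right, and you correctly identify that retaining all scales $k\ge j$ is what produces the extra power $r+1$. The one gap is the \emph{form} of the constant. The lemma asserts a constant $\kappa^{r}$ with $\kappa$ independent of $r$, whereas you obtain $2^{-(2r+1)(n-2\alpha)}/(r+1)$ and explicitly note it depends on $r$. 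These are not interchangeable: your constant tends to $2^{-(n-2\alpha)}<1$ as $r\to0^{+}$, while $\kappa^{r}\to1$, so no fixed $\kappa>0$ makes $2^{-(2r+1)(n-2\alpha)}/(r+1)\ge\kappa^{r}$ hold for all $r>0$. The offending factor $2^{-(n-2\alpha)}$ comes from replacing $|x-y|^{-(n-2\alpha)}$ on $A_j$ by its worst value $2^{-(j+1)(n-2\alpha)}$.

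That loss can be removed by running the same idea continuously instead of dyadically. Set $F(t)=\int_{|y-x|<t}\frac{d\sigma(y)}{|x-y|^{n-2\alpha}}$; your pointwise bound becomes $\ia\sigma(y)\ge 2^{-(n-2\alpha)}\bigl(\ia\sigma(x)-F(|y-x|)\bigr)$, and since the pushforward of the measure $\frac{d\sigma(y)}{|x-y|^{n-2\alpha}}$ under $y\mapsto|y-x|$ is exactly $dF$, one gets
\begin{equation*}
\ia\bigl((\ia\sigma)^{r}d\sigma\bigr)(x)\ \ge\ 2^{-r(n-2\alpha)}\int_0^{\infty}\bigl(\ia\sigma(x)-F(t)\bigr)^{r}\,dF(t)\ \ge\ \frac{2^{-r(n-2\alpha)}}{r+1}\bigl(\ia\sigma(x)\bigr)^{r+1},
\end{equation*}
the last step being the Stieltjes version of your telescoping (the inequality survives jumps of $F$ because $u\mapsto(\ia\sigma(x)-u)^{r}$ is decreasing). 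Now the only $r$-dependence is the genuine $r$-th power $2^{-r(n-2\alpha)}$ together with $(r+1)^{-1}\ge e^{-r}$, so $\kappa=2^{-(n-2\alpha)}/e$ works and depends only on $n$ and $\alpha$. For context, in Section~4 of the paper the lemma is applied only at the fixed exponents $r=q_1\gamma_2$ and $r=q_2\gamma_1$, so your weaker constant would still serve the application; it just does not recover the lemma as stated.
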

Consider the following pair of functions
\begin{equation*}
    (\underline{u},\underline{v})= \big(\lambda (\ia\sigma)^{\gamma_1}, \lambda (\ia\sigma)^{{\gamma_2}}\big),
\end{equation*}
where $\lambda$ is a constant to be found.  Then we have
\begin{equation*}
\begin{aligned}
    \ia\left(\underline{v}^{q_1}\mathrm{d} \sigma + d\mu_1\right)&= {\lambda}^{q_1}\ia( (\ia\sigma)^{q_1{\gamma_2}}\mathrm{d}\sigma + d\mu_1)\geq {\lambda}^{q_{1}}\left[\kappa^{q_{1}{\gamma_2}}(\ia\sigma)^{q_{1}{\gamma_2}+1} +\ia \mu_{1}\right]\\
    & \geq  {\lambda}^{q_{1}}\left[\kappa^{q_{1}{\gamma_2}}(\ia\sigma)^{{\gamma_1}} \right],\\    
     \ia\left(\underline{u}^{q_2}\mathrm{d} \sigma + d\mu_2\right)&= {\lambda}^{q_2}\ia( (\ia\sigma)^{q_2{\gamma_1}}\mathrm{d}\sigma + d\mu_2)\geq {\lambda}^{q_{2}}\left[\kappa^{q_{2}{\gamma_1}}(\ia\sigma)^{q_{2}{\gamma_1}+1} +\ia \mu_{2}\right]\\
    & \geq  {\lambda}^{q_{2}}\left[\kappa^{q_{2}{\gamma_1}}(\ia\sigma)^{{\gamma_2}} \right].\\   
\end{aligned}
\end{equation*}
Therefore, by choosing $\lambda$ sufficiently small we conclude that
\begin{equation*}
\begin{aligned}
    &\underline{u}\leq \ia(\underline{v}^{q_1}\mathrm{d}\sigma + d\mu_1),\\
    & \underline{v}\leq \ia(\underline{u}^{q_2}\mathrm{d}\sigma+ d\mu_2).
\end{aligned}
\end{equation*}
\subsection*{A supersolution $(\overline{u},\overline{v})$}
Now we show that there exists $\lambda>0$ sufficiently large such that
\begin{equation*}
(\overline{u},\overline{v})=\big(\lambda\left(\ia\sigma + \left(\ia\sigma\right)^{\gamma_1}\right), \lambda\left(\ia\sigma + \left(\ia\sigma\right)^{{\gamma_2}}\right)\big)
\end{equation*}
is a supersolution to \eqref{inho}, such that $\overline{u}\geq \underline{u}$ and $\overline{v}\geq \underline{v}$. The following lemma from \cite{verb17} (simplified to our setting) will be needed
\begin{lemma}\label{strcon}
       Let $\sigma\in {M}^+(\mathds{R}^n)$ satisfying \eqref{sigcon}. 
       Then, for every $s>0$, 
       \begin{equation}\label{con10}
           \int_E(\ia\sigma_E)^s\,\mathrm{d}\sigma\leq c\, \sigma(E) \quad \mbox{for all compact sets } E\subset \mathds{R}^n,
       \end{equation}
       where $c$ is a positive constant. Moreover, if \eqref{con10} holds for a given $s>0$, then \eqref{sigcon} holds; hence \eqref{con10} holds for every $s>0$.
\end{lemma}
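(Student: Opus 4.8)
\emph{Proof strategy.} The assertion is the equivalence chain ``\eqref{sigcon} $\iff$ \eqref{con10} for one $s>0$ $\iff$ \eqref{con10} for all $s>0$'', and the plan is to route it through the exponent $s=1$ together with a self‑improvement of the exponent. Two remarks cut down the work. First, if \eqref{con10} holds for some $s$ and $0<s'<s$, then it holds for $s'$ by Hölder's inequality with exponents $s/s'$ and its conjugate against $d\sigma$ on $E$; so only the passage to \emph{larger} exponents carries content. Second, \eqref{con10} for any single $s>0$ already forces $\sigma(B(x,r))\le C\,r^{n-2\alpha}$ for $r\le 1$ (test \eqref{con10} on $E=B(x,2r)$, use $\ia\sigma_E(y)\ge(2r)^{2\alpha-n}\sigma(B(x,r))$ for $y\in B(x,r)$, and iterate in $r$), whence $\sigma$ charges no set of zero $\mathrm{cap}_{\alpha,2}$‑capacity --- the only structural fact about $\sigma$ needed below.

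\emph{The case $s=1$.} I claim \eqref{sigcon} is equivalent to \eqref{con10} with $s=1$. Given \eqref{con10}$(1)$, fix a compact $E$ and let $\nu_E$ be its equilibrium measure, so $\mathrm{supp}\,\nu_E\subseteq E$, $\ia\nu_E\ge1$ quasi‑everywhere on $E$ (hence $\sigma$‑a.e. on $E$, by the preceding remark), and $\int\ia\nu_E\,d\nu_E=\mathrm{cap}_{\alpha,2}(E)$. Then, by Fubini (the kernel is symmetric), the mutual‑energy form of Cauchy--Schwarz, \eqref{con10}$(1)$ and the last identity,
\begin{align*}
\sigma(E)&\le\int_E\ia\nu_E\,d\sigma=\int\ia\sigma_E\,d\nu_E=\langle\mathbf{I}_{\alpha}\sigma_E,\mathbf{I}_{\alpha}\nu_E\rangle_{L^2}\\
&\le\Big(\int_E\ia\sigma_E\,d\sigma\Big)^{1/2}\mathrm{cap}_{\alpha,2}(E)^{1/2}\le\big(c\,\sigma(E)\big)^{1/2}\mathrm{cap}_{\alpha,2}(E)^{1/2},
\end{align*}
and squaring gives \eqref{sigcon}. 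Conversely, by Maz'ya's capacitary characterization of the trace inequality, \eqref{sigcon} is equivalent to $\|\mathbf{I}_{\alpha}f\|_{L^2(\sigma)}\le c\|f\|_{L^2(dx)}$ for $f\ge0$, and by duality (symmetry of the positive kernel) to $\|\mathbf{I}_{\alpha}(g\,d\sigma)\|_{L^2(dx)}\le c\|g\|_{L^2(\sigma)}$; taking $g=\mathbf 1_E$ and using $\|\mathbf{I}_{\alpha}\sigma_E\|_{L^2(dx)}^2=\int_E\ia\sigma_E\,d\sigma$ yields \eqref{con10}$(1)$.

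\emph{Self‑improvement of the exponent (the main obstacle).} It remains to show that \eqref{con10} for one exponent $s_0>0$, with constant $c_0$, implies it for every $s>0$; with the Hölder reduction this closes everything, since then $\eqref{sigcon}\iff\eqref{con10}(1)\Rightarrow\eqref{con10}(s)$ for all $s$, while \eqref{con10} for any single $s$ gives \eqref{con10}$(1)$ and hence \eqref{sigcon}. As $\phi:=\ia\sigma_E$ is lower semicontinuous, the level sets $\Omega_\lambda:=\{x\in E:\phi(x)>\lambda\}$ are open, and one runs a good‑$\lambda$/Calderón--Zygmund argument on them. Take a Whitney decomposition $\Omega_\lambda=\bigsqcup_j Q_j$ with dilates $Q_j^{*}\subseteq\Omega_\lambda$ of bounded overlap. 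The key estimate is the \emph{localization bound}: for $x\in Q_j$ one has $\ia(\sigma_{E\setminus Q_j^{*}})(x)\le C\lambda$, since a point $x_j\in\partial\Omega_\lambda$ at distance $\approx\ell(Q_j)$ from $Q_j$ has $\phi(x_j)\le\lambda$, and the Riesz potential of mass at distance $\gtrsim\ell(Q_j)$ varies by bounded factors between $x$ and $x_j$. Hence $\Omega_{M\lambda}\cap Q_j\subseteq\{\,\ia\sigma_{E\cap Q_j^{*}}>(M-C)\lambda\,\}$, and Chebyshev together with \eqref{con10}$(s_0)$ applied on $E\cap Q_j^{*}$ give $\sigma(\Omega_{M\lambda}\cap Q_j)\le c_0\big((M-C)\lambda\big)^{-s_0}\sigma(E\cap Q_j^{*})$; summing over $j$, $\sigma(\Omega_{M\lambda})\le c_0C'\big((M-C)\lambda\big)^{-s_0}\sigma(\Omega_\lambda)$. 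For a target $s$, fix $\eta>0$ with $\eta(2C)^{s}<1$ and then $\lambda_0$ so large that $M:=C+(c_0C'/\eta)^{1/s_0}/\lambda_0\le 2C$; then $\sigma(\Omega_{M\lambda})\le\eta\,\sigma(\Omega_\lambda)$ for $\lambda\ge\lambda_0$, so $\sigma(\Omega_{M^k\lambda_0})\le\eta^k\sigma(E)$, and the layer‑cake formula $\int_E\phi^{s}d\sigma=s\int_0^\infty\lambda^{s-1}\sigma(\Omega_\lambda)\,d\lambda$ with $\sum_k(\eta M^{s})^k<\infty$ gives $\int_E(\ia\sigma_E)^{s}d\sigma\le c_s\,\sigma(E)$ (the constant $c_s$ blowing up as $s\to\infty$, which is permitted). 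The delicate point is precisely the Whitney‑localization step, made uniform over all compact $E$ simultaneously and converted into geometric decay of the level sets; the rest is classical (the $s=1$ equivalence via the equilibrium measure and Maz'ya's inequality) or routine (Hölder, Chebyshev, layer‑cake). Since Lemma~\ref{strcon} is exactly the $p=2$ instance of the result quoted from \cite{verb17}, one may alternatively invoke it directly.
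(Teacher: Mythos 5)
The paper does not prove Lemma~\ref{strcon}: it is stated with the remark ``The following lemma from \cite{verb17} (simplified to our setting) will be needed'' and then used without a proof. So there is no in-paper argument to line yours up against; the ``official proof'' is the citation. Your closing sentence acknowledges exactly this, and that is all the paper itself does.

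Your proposal is a genuine from-scratch proof, and its architecture is sound: Hölder gives downward monotonicity in $s$; the $s=1$ equivalence is an honest two-sided argument (equilibrium measure plus Cauchy--Schwarz on the $\mathbf{I}_{\alpha}$-energy pairing in one direction, Maz'ya's capacitary trace inequality plus $L^2$ duality in the other); and the upward self-improvement is the expected good-$\lambda$/Whitney argument on superlevel sets, with a correct localization bound and a layer-cake finish. Three details deserve correction before this could stand as a proof. First, $\Omega_\lambda:=\{x\in E:\phi(x)>\lambda\}$ is generally not open (it is the intersection of an open set with $E$); take $\Omega_\lambda:=\{x\in\mathds{R}^n:\phi(x)>\lambda\}$, which is open and bounded, Whitney-decompose that, and run the estimate for $\sigma_E$ rather than $\sigma$ — then $\sigma_E(\Omega_{\lambda_0})\leq\sigma(E)$ and everything you wrote goes through with $\sigma$ replaced by $\sigma_E$. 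Second, and relatedly, the Chebyshev step needs $\Omega_{M\lambda}\cap Q_j\cap E\subseteq E\cap Q_j^{*}$ so that \eqref{con10}$(s_0)$ applied to (the closure of) $E\cap Q_j^{*}$ is actually usable; with the $\sigma_E$ formulation this is immediate. Third, the side remark that \eqref{con10} for a single $s_0$ forces $\sigma(B(x,r))\leq Cr^{n-2\alpha}$ for $r\leq1$ does not follow as stated: the iteration $A(r)^{s_0+1}\leq CA(2r)$ propagates \emph{upward} in scale and only yields a contradiction if one already has an a priori bound on $A$ at large scales, which a general $\sigma\in M^{+}(\mathds{R}^n)$ need not satisfy. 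Fortunately this remark is not needed: the only place you use it is to upgrade ``$\ia\nu_E\geq1$ q.e.\ on $E$'' to ``$\sigma$-a.e.\ on $E$'', and by the time you invoke that you already have \eqref{con10}$(1)$, which says $\sigma_E$ has finite $\alpha$-energy and hence (a standard potential-theoretic fact) charges no set of zero $\mathrm{cap}_{\alpha,2}$-capacity. Replacing the ball-growth remark with this finite-energy observation closes the gap and simplifies the exposition. With those fixes the argument is a correct and essentially self-contained proof of a result the paper merely imports.
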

We have
\begin{align}
    \ia(\overline{v}^{q_1}\mathrm{d}\sigma + d\mu_1)(x)&\leq {\lambda}^{q_{1}}\left[c\int_0^{\infty}\frac{\int_{B(x,t)}(\ia\sigma)^{q_1}\mathrm{d}\sigma +(\ia\sigma)^{{\gamma_2}q_1}\mathrm{d}\sigma }{t^{n-2\alpha }}\frac{\mathrm{d} t}{t} + \ia\mu_1(x)\right]\nonumber\\
    &\leq {\lambda}^{q_{1}}\left[c(\int_0^{\infty}\frac{\int_{B(x,t)}(\ia\sigma)^{q_1}\mathrm{d}\sigma }{t^{n-2\alpha }}\frac{\mathrm{d} t}{t}+\int_0^{\infty}\frac{\int_{B(x,t)}(\ia\sigma)^{{\gamma_2}q_1}\mathrm{d}\sigma }{t^{n-2\alpha }}\frac{\mathrm{d} t}{t}) + C\,\ia\sigma(x)\right]\nonumber\\
    &\leq {\lambda}^{q_{1}}\left[c(I + II)+ C\,\ia\sigma(x)\right]
\end{align}
We estimate I and II separately. Notice that lemma \ref{strcon} implies that $$I \leq c\, \ia\sigma(x).$$ Before estimating $II$, consider the following estimate
\begin{align*}
    \int_{B(x,t)}(\ia\sigma)^{{\gamma_2} q_1}\,\mathrm{d}\sigma&= \int_{B(x,t)}\Big[\int_0^\infty\Big(\frac{\sigma(B(y,r))}{r^{n-2\alpha }}\Big)\,\frac{\mathrm{d} r}{r}\Big]^{{\gamma_2} q_1}\,\mathrm{d}\sigma(y)\\
    & \leq c \int_{B(x,t)}\Big[\int_0^t\Big(\frac{\sigma(B(y,r))}{r^{n-\alpha p}}\Big)\,\frac{\mathrm{d} r}{r}\Big]^{{\gamma_2} q_1}\,\mathrm{d}\sigma(y)\\
    &\quad + c\int_{B(x,t)}\Big[\int_t^\infty\Big(\frac{\sigma(B(y,r))}{r^{n-\alpha p}}\Big)\,\frac{\mathrm{d} r}{r}\Big]^{{\gamma_2} q_1}\,\mathrm{d}\sigma(y) =: c\left(II_1+II_2\right),
\end{align*}
For $y\in B(x,t)$ and $r\leq t$, notice that $B(y,r)\subset B(x,2t)$, consequently
\begin{align*}
    II_1&= \int_{B(x,t)}\Big[\int_0^t\Big(\frac{\sigma(B(y,r))}{r^{n-\alpha p}}\Big)\,\frac{\mathrm{d} r}{r}\Big]^{{\gamma_2} q_1}\,\mathrm{d}\sigma(y)\\
    &\leq\int_{B(x,2t)}\Big[\int_0^t\Big(\frac{\sigma(B(y,r)\cap B(x,2t))}{r^{n-2\alpha }}\Big)\,\frac{\mathrm{d} r}{r}\Big]^{{\gamma_2} q_1}\,\mathrm{d}\sigma(y) \\
    & \leq\int_{B(x,2t)}\Big[\ia\sigma_{B(x,2t)}\Big]^{{\gamma_2} q_1}\,\mathrm{d}\sigma(y).
\end{align*}
Using \eqref{con10} again, we deduce
\begin{equation*}
    II_1\leq \int_{B(x,2t)}\left[\ia\sigma_{B(x,2t)}\right]^{{\gamma_2} q_1}\,\mathrm{d}\sigma(y)\leq c\,\sigma(B(x,2t)).
\end{equation*}
If $y\in B(x,t)$ and $r\geq t$ then $B(y,r)\subset B(x,2r)$, thus
\begin{align*}
    II_2&\leq \int_{B(x,t)}\Big[\int_t^\infty\Big(\frac{\sigma(B(x,2r))}{r^{n-2\alpha }}\Big)\,\frac{\mathrm{d} r}{r}\Big]^{{\gamma_2} q_1}\,\mathrm{d}\sigma(y) \\
    & = \sigma(B(x,t))\Big[\int_t^\infty\Big(\frac{\sigma(B(x,2r))}{r^{n-2\alpha}}\Big)\,\frac{\mathrm{d} r}{r}\Big]^{{\gamma_2} q_1}\\
    &\leq \sigma(B(x,t))\Big[\int_0^\infty\Big(\frac{\sigma(B(x,2r))}{r^{n-2\alpha }}\Big)\,\frac{\mathrm{d} r}{r}\Big]^{{\gamma_2} q_1}\leq c\,\sigma(B(x,t))\left[\ia\sigma (x)\right]^{{\gamma_2} q_1}.
\end{align*}
Combining $II_{1}$ and $II_{2}$ we obtain
\begin{equation*}
   \int_{B(x,t)}(\ia\sigma)^{{\gamma_2} q_1}\,\mathrm{d}\sigma\leq c\,\left[\sigma(B(x,2t))+\left(\ia\sigma (x)\right)^{{\gamma_2} q_1}\sigma(B(x,t))\right], 
\end{equation*}
and we finally conclude that $$II\leq c(\ia\sigma(x) +(\ia\sigma)^{\gamma1}),$$ so 
\begin{align*}
\ia(\overline{v}^{q_1}\mathrm{d}\sigma + d\mu_1)(x)&\leq \lambda^{q_{1}}(c(\ia\sigma(x) +(\ia\sigma)^{\gamma1}+C\ia\sigma(x))\\
&\leq \lambda^{q_{1}}c\,(\ia\sigma(x) +(\ia\sigma)^{\gamma1})
\end{align*}
In conclusion, by choosing $\lambda$ sufficiently large we have $$\ia(\overline{v}^{q_1}\mathrm{d}\sigma + d\mu_1)(x)\leq\overline{u}.$$

Similarly, the exact same reasoning for $\overline{u},q_{2},\gamma_{2},\mu_{2}$ instead of $\overline{v},q_{1},\gamma_{1},\mu_{1}$ should give that we can find $\lambda$ such that$$\ia(\overline{u}^{q_2}\mathrm{d}\sigma + d\mu_2)(x)\leq\overline{v}.$$ We conclude that $(\overline{u},\overline{v})$ is a supersolution. By arguing as in the proof of theorem \ref{con1}, we arrive at a solution $(u,v)$ satisfying 
\begin{align*}
    & \underline{u}\leq u\leq \overline{u},\\
    &  \underline{v}\leq v\leq \overline{v},
\end{align*}
in particular \eqref{estimateDoO} holds, which concludes the proof of theorem 2.
\section{Some Open problems}
We propose the following problems that could possibly generalize the ideas discussed here. 
\begin{enumerate}
\item Is it still possible to use the method of sub-super solutions to solve the more general system below?
\begin{equation}
     \left\{
\begin{aligned}
& (-\Delta)^{\alpha} u= \sigma\, u^{q_1}+ \mu\, v^{r_1}, \quad v>0 \quad \mbox{in}\quad \mathbb{R}^n,\\ 
& (-\Delta)^{\alpha} v= \sigma\, v^{q_2} + \mu\, v^{r_2}, \quad u>0 \quad \mbox{in}\quad \mathbb{R}^n,\\
& \liminf\limits_{|x|\to \infty}u(x)=0, \quad \liminf\limits_{|x|\to \infty}v(x)=0.
\end{aligned}
\right.
 \end{equation}
 \item Let $\sigma$ is radially symmetric and consider the system
 \begin{equation}
     \left\{
\begin{aligned}
& -\Delta_{p}u= \sigma\, v^{q_1}, \quad v>0 \quad \mbox{in}\quad \mathbb{R}^n,\\ 
& -\Delta_{p} v= \sigma\, u^{q_2}, \quad u>0 \quad \mbox{in}\quad \mathbb{R}^n,\\
& \liminf\limits_{|x|\to \infty}u(x)=0, \quad \liminf\limits_{|x|\to \infty}v(x)=0.
\end{aligned}
\right.
 \end{equation}
 What would be an equivalent condition to \eqref{limc} in this case?
\end{enumerate}
\bibliography{sn-article}


\begin{thebibliography}{9}
\ifx \bisbn   \undefined \def \bisbn  #1{ISBN #1}\fi
\ifx \binits  \undefined \def \binits#1{#1}\fi
\ifx \bauthor  \undefined \def \bauthor#1{#1}\fi
\ifx \batitle  \undefined \def \batitle#1{#1}\fi
\ifx \bjtitle  \undefined \def \bjtitle#1{#1}\fi
\ifx \bvolume  \undefined \def \bvolume#1{\textbf{#1}}\fi
\ifx \byear  \undefined \def \byear#1{#1}\fi
\ifx \bissue  \undefined \def \bissue#1{#1}\fi
\ifx \bfpage  \undefined \def \bfpage#1{#1}\fi
\ifx \blpage  \undefined \def \blpage #1{#1}\fi
\ifx \burl  \undefined \def \burl#1{\textsf{#1}}\fi
\ifx \doiurl  \undefined \def \doiurl#1{\url{https://doi.org/#1}}\fi
\ifx \betal  \undefined \def \betal{\textit{et al.}}\fi
\ifx \binstitute  \undefined \def \binstitute#1{#1}\fi
\ifx \binstitutionaled  \undefined \def \binstitutionaled#1{#1}\fi
\ifx \bctitle  \undefined \def \bctitle#1{#1}\fi
\ifx \beditor  \undefined \def \beditor#1{#1}\fi
\ifx \bpublisher  \undefined \def \bpublisher#1{#1}\fi
\ifx \bbtitle  \undefined \def \bbtitle#1{#1}\fi
\ifx \bedition  \undefined \def \bedition#1{#1}\fi
\ifx \bseriesno  \undefined \def \bseriesno#1{#1}\fi
\ifx \blocation  \undefined \def \blocation#1{#1}\fi
\ifx \bsertitle  \undefined \def \bsertitle#1{#1}\fi
\ifx \bsnm \undefined \def \bsnm#1{#1}\fi
\ifx \bsuffix \undefined \def \bsuffix#1{#1}\fi
\ifx \bparticle \undefined \def \bparticle#1{#1}\fi
\ifx \barticle \undefined \def \barticle#1{#1}\fi
\bibcommenthead
\ifx \bconfdate \undefined \def \bconfdate #1{#1}\fi
\ifx \botherref \undefined \def \botherref #1{#1}\fi
\ifx \url \undefined \def \url#1{\textsf{#1}}\fi
\ifx \bchapter \undefined \def \bchapter#1{#1}\fi
\ifx \bbook \undefined \def \bbook#1{#1}\fi
\ifx \bcomment \undefined \def \bcomment#1{#1}\fi
\ifx \oauthor \undefined \def \oauthor#1{#1}\fi
\ifx \citeauthoryear \undefined \def \citeauthoryear#1{#1}\fi
\ifx \endbibitem  \undefined \def \endbibitem {}\fi
\ifx \bconflocation  \undefined \def \bconflocation#1{#1}\fi
\ifx \arxivurl  \undefined \def \arxivurl#1{\textsf{#1}}\fi
\csname PreBibitemsHook\endcsname

\bibitem[\protect\citeauthoryear{Adams and Hedberg}{2010}]{adams96}
\begin{bbook}
\bauthor{\bsnm{Adams}, \binits{D.R.}},
\bauthor{\bsnm{Hedberg}, \binits{L.I.}}:
\bbtitle{Function Spaces and Potential Theory}.
\bsertitle{Grundlehren der mathematischen Wissenschaften}.
\bpublisher{Springer},
\blocation{Berlin, Germany}
(\byear{2010}).
\doiurl{10.1007/978-3-662-03282-4}
\end{bbook}
\endbibitem

\bibitem[\protect\citeauthoryear{Brezis and Kamin}{1992}]{brezis92}
\begin{barticle}
\bauthor{\bsnm{Brezis}, \binits{H.}},
\bauthor{\bsnm{Kamin}, \binits{S.}}:
\batitle{Sublinear elliptic equations in rn}.
\bjtitle{manuscripta mathematica}
\bvolume{74}(\bissue{1}),
\bfpage{87}--\blpage{106}
(\byear{1992})
\doiurl{10.1007/BF02567660}
\end{barticle}
\endbibitem

\bibitem[\protect\citeauthoryear{Cao and Verbitsky}{2016}]{verb16}
\begin{barticle}
\bauthor{\bsnm{Cao}, \binits{D.T.}},
\bauthor{\bsnm{Verbitsky}, \binits{I.E.}}:
\batitle{Pointwise estimates of brezis–kamin type for solutions of sublinear
  elliptic equations}.
\bjtitle{Nonlinear Analysis}
\bvolume{146},
\bfpage{1}--\blpage{19}
(\byear{2016})
\doiurl{10.1016/j.na.2016.08.008}
\end{barticle}
\endbibitem

\bibitem[\protect\citeauthoryear{Cao and Verbitsky}{2017}]{verb17}
\begin{barticle}
\bauthor{\bsnm{Cao}, \binits{D.}},
\bauthor{\bsnm{Verbitsky}, \binits{I.}}:
\batitle{Nonlinear elliptic equations and intrinsic potentials of wolff type}.
\bjtitle{Journal of Functional Analysis}
\bvolume{272}(\bissue{1}),
\bfpage{112}--\blpage{165}
(\byear{2017})
\doiurl{10.1016/j.jfa.2016.10.010}
\end{barticle}
\endbibitem

\bibitem[\protect\citeauthoryear{{da Silva} and {do O}}{2024}]{bib1}
\begin{barticle}
\bauthor{\bsnm{{da Silva}}, \binits{E.L.}},
\bauthor{\bsnm{{do O}}, \binits{J.M.}}:
\batitle{Quasilinear lane–emden type systems with sub-natural growth terms}.
\bjtitle{Nonlinear Analysis}
\bvolume{242},
\bfpage{113516}
(\byear{2024})
\doiurl{10.1016/j.na.2024.113516}
\end{barticle}
\endbibitem

\bibitem[\protect\citeauthoryear{Heinonen et~al.}{2006}]{hei2006}
\begin{bbook}
\bauthor{\bsnm{Heinonen}, \binits{J.}},
\bauthor{\bsnm{Kilpelainen}, \binits{T.}},
\bauthor{\bsnm{Martio}, \binits{O.}}:
\bbtitle{Nonlinear Potential Theory of Degenerate Elliptic Equations}.
\bpublisher{Dover Publications},
\blocation{Mineola, NY}
(\byear{2006})
\end{bbook}
\endbibitem

\bibitem[\protect\citeauthoryear{Hedberg and Wolff}{1983}]{wolff83}
\begin{barticle}
\bauthor{\bsnm{Hedberg}, \binits{L.}},
\bauthor{\bsnm{Wolff}, \binits{T.}}:
\batitle{Thin sets in nonlinear potential theory}.
\bjtitle{Annales de l'institut Fourier}
\bvolume{33}(\bissue{4}),
\bfpage{161}--\blpage{187}
(\byear{1983})
\end{barticle}
\endbibitem

\bibitem[\protect\citeauthoryear{Kilpel\"ainen and Mal\'y}{1992}]{maly1}
\begin{barticle}
\bauthor{\bsnm{Kilpel\"ainen}, \binits{T.}},
\bauthor{\bsnm{Mal\'y}, \binits{J.}}:
\batitle{Degenerate elliptic equations with measure data and nonlinear
  potentials}.
\bjtitle{Annali della Scuola Normale Superiore di Pisa - Classe di Scienze}
\bvolume{Ser. 4 19}(\bissue{4}),
\bfpage{591}--\blpage{613}
(\byear{1992})
\end{barticle}
\endbibitem

\bibitem[\protect\citeauthoryear{Kilpel{\"a}inen and Mal{\'y}}{1994}]{maly2}
\begin{barticle}
\bauthor{\bsnm{Kilpel{\"a}inen}, \binits{T.}},
\bauthor{\bsnm{Mal{\'y}}, \binits{J.}}:
\batitle{{The Wiener test and potential estimates for quasilinear elliptic
  equations}}.
\bjtitle{Acta Mathematica}
\bvolume{172}(\bissue{1}),
\bfpage{137}--\blpage{161}
(\byear{1994})
\doiurl{10.1007/BF02392793}
\end{barticle}
\endbibitem

\end{thebibliography}
\end{document}